\documentclass[11pt]{article}
\usepackage[a4paper,hmargin=2.5cm,vmargin=2cm]{geometry}

\usepackage{natbib}%
\usepackage[figuresright]{rotating}

\usepackage{url}
\urlstyle{same}

\usepackage{algorithmic}
\usepackage{algorithm}

\usepackage{amsmath,amsthm}
\newtheorem{theorem}{Theorem}

\theoremstyle{definition}

\theoremstyle{remark}

\usepackage[colorlinks,citecolor=blue,urlcolor=blue,bookmarks=false,hypertexnames=true]{hyperref} 
\usepackage{latexsym}
\usepackage{amsmath}
\usepackage{amssymb}
\usepackage{mathtools}
\usepackage{enumitem} 
\usepackage{fancyhdr}
\usepackage{lscape}
\usepackage{booktabs}
\usepackage{array,longtable}
\makeatletter
\def\LT@makecaption#1#2#3{%
  \LT@mcol\LT@cols c{\hbox to\z@{\hss\parbox[t]\LTcapwidth{%
    \captionstyle
    \sbox\@tempboxa{{\floatlegendstyle#1{#2}\floatcounterend}\capstrut #3}%
    \ifdim\wd\@tempboxa>\hsize
      {\floatlegendstyle#1{#2}\floatcounterend}\capstrut #3\par%
    \else
      \hbox to\hsize{\leftlegendglue\box\@tempboxa\hfil}%
    \fi
    \endgraf\vskip\baselineskip}%
  \hss}}}
\LTcapwidth=\dimexpr\textwidth-2\fboxsep\relax
\makeatother
\usepackage{amsfonts}
\usepackage{booktabs}
\usepackage{dsfont} 
\usepackage{multirow}
\usepackage{subfig}
\usepackage{enumitem}
\usepackage{color}
\usepackage{colortbl}
\usepackage{xcolor}
\usepackage{relsize} 
\usepackage{fancyhdr}
\usepackage{braket}
\usepackage{array}
\usepackage{tabularx}
\usepackage[flushleft]{threeparttable} 
\usepackage{caption}
\usepackage{cancel}
\makeatletter
\renewcommand*\env@matrix[1][c]{\hskip -\arraycolsep
  \let\@ifnextchar\new@ifnextchar
  \array{*\c@MaxMatrixCols #1}}
\makeatother
\usepackage{dsfont} 
\usepackage{colortbl}
\usepackage{mathrsfs}
\usepackage{tikz}
\usetikzlibrary{shapes,arrows}
\usepackage{textcomp}
\usepackage[american]{circuitikz}
\usepackage{siunitx}
\usepackage{pgf}
\usepackage{pgfkeys}
\usepackage{xstring}
\usepackage{pdflscape} 
\usepackage{float}
\floatstyle{ruled}
\newfloat{linearmodel}{thp}{lop}[section]
\floatname{linearmodel}{LM}
\makeatletter
\newcommand{\Spvek}[2][r]{%
  \gdef\@VORNE{1}
  \left(\hskip-\arraycolsep%
    \begin{array}{#1}\vekSp@lten{#2}\end{array}%
  \hskip-\arraycolsep\right)}
\def\vekSp@lten#1{\xvekSp@lten#1;vekL@stLine;}
\def\vekL@stLine{vekL@stLine}
\def\xvekSp@lten#1;{\def\temp{#1}%
  \ifx\temp\vekL@stLine
  \else
    \ifnum\@VORNE=1\gdef\@VORNE{0}
    \else\@arraycr\fi%
    #1%
    \expandafter\xvekSp@lten
  \fi}
\makeatother


\title{A Heuristic Algorithm for Traffic Light Synchronization Based on the MAXBAND Model}

\footnotesize\date{May 2018}

\author{
Xavier Cabezas\\
\footnotesize The University of Edinburgh\\
\footnotesize \texttt{J.X.Cabezas@sms.ed.ac.uk}\\
\footnotesize Escuela Superior Polit\'ecnica del Litoral\\
\footnotesize \texttt{joxacabe@espol.edu.ec}\\  
\and
Sergio Garc\'ia\\
\footnotesize The University of Edinburgh\\
\footnotesize \texttt{Sergio.Garcia-Quiles@ed.ac.uk} \\ 
}

\begin{document}

\maketitle

\begin{abstract}
\noindent A widely used approach to solve the synchronization of traffic lights on transport networks is the maximization of the time during which cars start at one end of a street and can go to the other without stopping for a red light (bandwidth maximization). The mixed integer linear model found in the literature, named MAXBAND, can be solved by optimization solvers only for small instances. In this paper we review in detail all the constraints of the original linear model, including those that describe all the cyclic routes in the graph, and we generalize some bounds for integer variables which so far had been presented only for problems that do not consider cycles. Finally, we propose a solution algorithm that uses Tabu Search and Variable Neighbourhood Search and we carry out a computational study to show that it performs very well for large instances.
\end{abstract}

{\bf Keywords:} MAXBAND; Traffic lights, Traffic light synchronization, Heuristic.

\maketitle

\section{Introduction}\label{sec:Introduction}

Traffic lights have been with us for a long time (since late in the 19th century) and they are used in cities  to control the flow of vehicles. But its use also leads to some problems such as time delays when moving from one place to another and increased pollution due to changes in the speeds of the vehicles. Because of the increase in urban traffic year by year, its timing has been a topic of interest for many researchers. Most of the papers focus on two aspects: minimizing some measure to assess the performance of the traffic (e.g, delays) and maximizing the time that vehicles can drive without stopping for red lights (bandwidth maximization).  

With respect to traffic flow measures, it has been usual to minimize either the overall delay or the number of stops of the vehicles. One of the first models was developed by \cite{Gartner75}. This study demonstrated the feasibility of using mixed integer linear programming to optimize traffic signal settings for practical but small size road networks. In that work a convex nonlinear objective function and linear constraints were considered, but a piecewise linearisation in the objective can be done by increasing the number of constraints in the formulation. The objective function measures the overflow queue wich represents the number of vehicles that are not able to clear an intersection during the preceding green time and the constraints consider the possible loops in which a platoon of vehicles could incur when navigating in a network. However, some realistic constraints were not considered in this case. As an example, different patterns of light changes at street junctions were not taken into account; this would allow to consider either crosswalk times or the synchronization of traffic lights towards cross streets. This method was also used by \cite{ThesisW} to show a similar linear model with some differences. One is that the model is able to decide among different predetermined signal timing plans. Another is that the assumption of a common red-green period width at the signals is relaxed. However, it is not a hard constraint because in that case the least common multiple of all red-green periods could be used as a uniform period, as mentioned in \cite{Kohler}. The linear model was tested on a family of real-world transport networks of up to 146 nodes and 399 arcs. Also \cite{ThesisW} showed a formal proof of the NP-completeness of the signals timing problem. Another reference can be found in \cite{Improta1982}, the authors defined a linear model similar to the one proposed by \cite{Gartner75} and additionally developed a branch and bound procedure with backtracking to solve it which relaxes some assumptions imposed in the original model. Unfortunately, only small examples are reported (up to 9 nodes and 16 arcs).

With regard to bandwidth maximization, one of the first papers was \cite{Little64}. In that paper, the authors presented a geometric and intuitive method for bandwidth maximization on a two-way street with a given fixed and common red-green time period on each signal and preassigned vehicle velocities. Even though there had been some geometric methods developed earlier, this paper presented a systematic algorithm easy to implement. A couple of years later, \cite{Little66} proposed for the first time a mixed integer linear program (MILP) to solve a new version of the problem (more complete) which does not assume a fixed red-green time period, but this is chosen by the model between some given bounds. Upper and lower limits on velocity between adjacent signals and changes in speed are also considered. Solving the model yields a common signal red-green period, velocities between signals and maximal bandwidths on the streets. An extension is provided to solve the same problem on general networks, but only very small instances could be solved. On networks the problem is more difficult because it is necessary to introduce the so-called loop constraints, which permit to model the circular movements that vehicles can do. Some years later, \cite{Little81} introduced a generalization that includes left turns at junctions. This linear model is traditionally called MAXBAND even though this is the name of the code developed to solve it. The first version of MAXBAND could handle problems on networks with only 3 arteries and up to 17 traffic signals. Later \cite{multiband} extended the MAXBAND model by working with variable bandwidth for each street segment (MULTIBAND). This change allowed to incorporate a traffic factor on the objective function. More recently \cite{amband} proposed a new version of MULTIBAND called AM-BAND. This model tries to use better the available green times on both road directions by relaxing a symmetric assumption with respect to the progression line given in the MULTIBAND linear model. In \cite{Xianyu2012} and \cite{linkbased} a variable bandwidths is also considered, their approach uses the criteria of partitioning a large system into smaller subsystems and takes into account the impact of speed variation. However, this method is not based on integer programming.  

The use of heuristics to solve traffic light synchronization problems is common, specially due to the influence of the very successful commercial software for synchronization of traffic signals and traffic management named TRANSYT \citep{transyt}, which is an implementation of a method introduced by \cite{Robertson69}. TRANSYT provides a heuristic solution for a very complete and robust objective function by using microscopic simulation of traffic behaviour and genetic algorithms. Other references of using evolutionary methods can be seen in \cite{genetic3} and \cite{genetic4}. \cite{GartnerNew} proposed a heuristic method for the MAXBAND network problem that in a first stage solves a tree subproblem which considers a measure of interest. Then, the integer variables are fixed to the values obtained and used in a second stage to solve the whole problem. As far as we know there is no other reference about solving the MAXBAND on a complete network.

Since MAXBAND can only be solved by optimization solvers for very small instances, the main aim of this work is to develop a heuristic algorithm for bandwidth maximization. As the algorithm is based on the MAXBAND formulation, first we review the constraints in detail. Later, we generalize some existing bounds based on the ideas outlined in \cite{Little66}. Then, we propose a metaheuristic algorithm to solve the problem on complete networks. We carry out some computational experiments to verify how efficient the method is. The method we propose starts with a feasible solution of the problem and uses basic Tabu Search \citep{GloverTS1} ideas such as the use of a memory structure within an iterative local search process that allows that solutions found during execution to be more diverse. In addition, the search is intensified by a sequence of neighbour solutions search processes, just as in Variable Neighbourhood Search (VNS), see \cite{VNS}. 

One of the key factors that influence the performance of any of the methods mentioned above is the large amount of information they require from the network. Transport networks in the real world are, in some cases, similar to a grid of streets that intersect with each other (grid graph). Therefore, each intersection may have traffic lights for possible vehicle entrances from four different directions. Information of the red and green light times on each of these can be required, as is the case of MAXBAND. Additional data such as the length of the queue of vehicles that wait to cross or turn to another street are also necessary. If real data is not available, a simulation of the network information must be carried out. The parameters to be generated must be consistent to allow feasible solutions; for example, green lights at the same time can not be allowed for all signals that are in the same junction of streets. Unfortunately, we have not been able to access real data and therefore we have opted to simulate them. 

The rest of the chapter is structured as follows. First we provide the main concepts of graph theory that will be used in this chapter as well as an algebraic treatment of cycle basis which will be used in the modelling of TLSP. Then, we review the MAXBAND model in Section \ref{maxband}, where we give a detailed explanation of all its elements and introduce the notation that will be used. Particularly, in Section \ref{intranode} we show how to model the loop constraints with a small number of binary variables. In Section \ref{s4} we generalize the bounds for arterial integer variables that were originally presented by \cite{Little66} for graphs that do not consider loops and extend them for general networks. In Section \ref{TSmethod} we propose a new MILP-based heuristic algorithm that uses Tabu Search and Variable Neighbourhood search. We carry out a computational study to show that our method performs very well for large instances when an initial feasible solution is provided. Some conclusions and ideas for future research are discussed in Section \ref{conclusionsTLSP}.

\section{The MAXBAND Model for a Network}\label{maxband}

Now we define the MAXBAND model notation and give a complete explanation of all variables and constraints involved. Particularly, we discuss in detail how loops are modelled.
 
Let us consider a group of two-way \emph{arteries} (streets) that meet each other in \emph{junctions} forming a transport \emph{network}. This network has some traffic signals in order to regulate traffic. They work with a common \emph{period} that is split in red and green times, this will be used as unit of measurement of time. It is sometimes called \emph{cycle length}, although we will try to avoid the use of this term because it can be confused with the \emph{loops} (also known as cycles) that are present in the network and which play a very important role in the formulation of the problem, as we will see later. Instead, we will use the name \emph{period length}. The distances (time units) that allow to measure the relative location between two signals on the same artery and on different arteries are called \emph{internode offset} and \emph{intranode offset} respectively. A list of offsets for the signals is said to be a \emph{synchronization}.

The MAXBAND model is based on the geometry that we can see in Figure \ref{ggm}. Information is provided for two signals $S_{ai}$ and $S_{aj}$ on an artery $a$. The notation used is essentially the same than in \cite{Little81}. The different variables and parameters are defined as follows:

\newpage

Parameters in Figure \ref{ggm}:

\begin{itemize}

\item $T$: Period length, in seconds.

\item $n_a$: Number of traffic lights (signals) on artery $a$.

\item $r_{ai}$ ($\overline{r}_{ai}$)$:$ Outbound (inbound) red time of signal $i$ on artery $a$, in a fraction of the period.

\item $\tau_{ai}$ ($\overline{\tau}_{ai}$)$:$ An advancement of the outbound (inbound) bandwidth upon leaving $S_i$, in a fraction of the period.

\end{itemize}

Variables in Figure \ref{ggm}:

\begin{itemize}

\item $z$: Signal frequency, in periods per second.

\item $b_a$ ($\overline{b}_a$)$:$ Outbound (inbound) bandwidth on artery $a$, in a fraction of the period.


\item $t_{ij}^a$ ($\overline{t}_{ij}^a$)$:$ Travel time from $S_{ai}$ to $S_{aj}$ in outbound (from~ $S_{aj}$ to $S_{ai}$ in inbound) direction, in periods.

\item $\phi_{ij}^a$ ($\overline{\phi}_{ij}^a$)$:$  Time from the center of red at $S_{ai}$ to the center of red at $S_{aj}$, in periods. The two reds are chosen so that each is immediately to the left (right) of the same outbound (inbound) green band. $\phi_{ij}^a$ ($\overline{\phi}_{ij}^a$) is positive if $S_{aj}$'s center of red lies to the right (left) of $S_{ai}$'s.

\item $w_{ai}$ ($\overline{w}_{ai}$)$:$ Time from the right (left) side of $S_{ai}$'s red to the left (right) side of green band in outbound (inbound) direction, in a fraction of the period.

\item $\Delta_{ai}:$ Time from center of $\overline{r}_{ai}$ to the nearest center of $r_{ai}$, in periods. It is positive from left to right.

\end{itemize}

In Figure \ref{ggm} a two direction (outbound and inbound) street (artery) is depicted in a space-time plane. The continuous lines formed by the union of red and green segments (a period), shown above the signals $S_{ai}$ and $S_{aj}$, represent the periodic behaviour of the traffic lights over time in the outbound direction, while the dash lines correspond to inbound. On each signal both lines do not necessarily overlap. This makes sense since it is possible to consider different movement patterns of vehicles to cross streets from the considered artery and vice versa, as we will see in Section \ref{sarterialc}. If these schemes are not taken into account both lines will coincide, as is the case of long roads where the red lights must be present only to allow the crossing of pedestrians in certain points of the road. Points A, B, C and D will be useful to define the different relationships between the variables and parameters necessary for the formulation.\\

The complete formulation for MAXBAND on a network can be seen in LM \ref{maxbandn}. It has additional parameters and variables (not showed in Figure \ref{ggm}), jointly with the constraints, these are explained next. Similar to \cite{Little81}, in this thesis we do not write $\alpha_{i,i+1}^a$ but $\alpha_i^a$ for any parameter or variable $\alpha$ and their corresponding names with bars for the opposite direction on an artery. 

\begin{landscape}
\begin{figure}[h]
  \centering
    \includegraphics[width=1.5\textwidth]{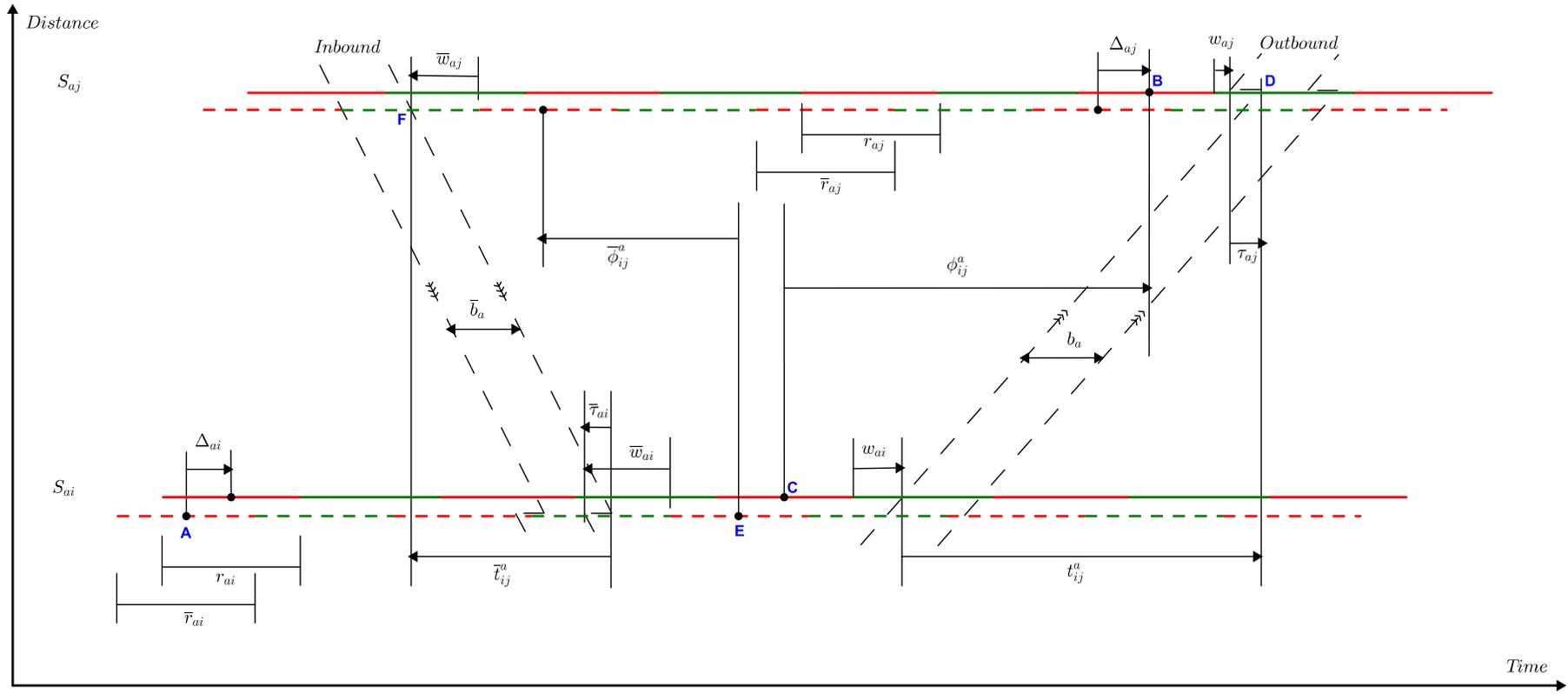}
  \caption{Geometry for MAXBAND model on artery $a$.}
  \label{ggm}
\end{figure}
\end{landscape}

\begin{linearmodel}[h]
\caption{MAXBAND, Maximal Bandwidth Formulation}
\label{maxbandn}

\footnotesize

\begin{align}
\text{Maximize}\hspace{4.1cm}  \sum_{\mathclap{a \in A}} \left( k_ab_{a}+\overline{k}_a\overline{b}_{a}\right) \hspace{4.4cm} \label{obj}
\end{align}

$\hspace{1.8cm}  \text{subject to}:$

\begin{align}
\frac{1}{T_2} \leq z \leq \frac{1}{T_1},& &  \label{lperiod} \\[0.05em]
w_{ai}+b_a \leq  1-r_{ai},& & \forall a \in A, \hspace{0.05cm} \forall i=1,\ldots,n_a, \label{greenc1}\\[0.1em]
\overline{w}_{ai}+\overline{b}_a \leq  1-\overline{r}_{ai},& & \forall a \in A, \hspace{0.05cm}  \forall i=1,\ldots,n_a, \label{greenc2} \\[0.1em]
(w_{ai}+\overline{w}_{ai})-(w_{a,i+1}+\overline{w}_{a,i+1})+(t_{i}^a+\overline{t}_{i}^a) \nonumber\\
+(\delta_{ai}\ell_{ai}-\overline{\delta}_{ai}\overline{\ell}_{ai})-(\delta_{a,i+1}\ell_{a,i+1}-\overline{\delta}_{a,i+1}\overline{\ell}_{a,i+1})+(r_{ai}-r_{a,i+1}) \nonumber\\
-(\tau_{a,i+1}+\overline{\tau}_{ai})=m_{i}^a,& & \forall a \in A, \hspace{0.05cm} \forall i=1,\ldots,n_a-1, \label{mbcompletec} \\[0.1em]
\left(\ \frac{d_{i}^a}{f_{i}^a}\right)z \leq t_{i}^a \leq  \left(\ \frac{d_{i}^a}{e_{i}^a}\right)z, & & \forall a \in A, \hspace{0.05cm} \forall i=1,\ldots,n_a-1, \label{vone} \\[0.1em]
\left(\ \frac{\overline{d}_{i}^a}{\overline{f}_{i}^a}\right)z \leq \overline{t}_{i}^a \leq  \left(\ \frac{\overline{d}_{i}^a}{\overline{e}_{i}^a}\right)z,& & \forall a \in A, \hspace{0.05cm} \forall i=1,\ldots,n_a-1, \label{vtwo} \\[0.1em]
\left(\ \frac{d_{i}^a}{h_{i}^a}\right)z \leq \left(\ \frac{d_{i}^a}{d_{i+1}^a}\right)t_{i+1}^a-t_{i}^a \leq  \left(\ \frac{d_{i}^a}{g_{i}^a}\right)z,& & \forall a \in A, \hspace{0.05cm} \forall i=1,\ldots,n_a-2, \label{rvone} \\[0.1em]
\left(\ \frac{\overline{d}_{i}^a}{\overline{h}_{i}^a}\right)z \leq \left(\ \frac{\overline{d}_{i}^a}{\overline{d}_{i+1}^a}\right)\overline{t}_{i+1}^a-\overline{t}_{i}^a \leq  \left(\ \frac{\overline{d}_{i}^a}{\overline{g}_{i}^a}\right)z,& & \forall a \in A, \hspace{0.05cm} \forall i=1,\ldots,n_a-2, \label{rvtwo} \\[0.1em]
\sum_{\mathclap{(i,j): a \in A_\zeta^F}} \phi_{ij}^a \hspace{0.2cm} - \hspace{0.2cm} \sum_{\mathclap{(i,j): a \in A_\zeta^B}} \phi_{ij}^a \hspace{0.2cm} + \hspace{0.2cm} \sum_{\mathclap{(b,j,i,c,k) \in J_\zeta}}\Psi_{S_{bj},S_{ck}}^i=C_{\zeta}, && \forall \zeta \in \mathcal{B}_\zeta, \label{cycleclm} \\[0.1em]
C_{\zeta} \in \mathbb{Z}, & & \forall \zeta \in \mathcal{B}_\zeta, \\[0.1em]
m_{i}^a \in \mathbb{Z}, & & \forall a \in A, \hspace{0.05cm} \forall i=1,\ldots,n_a-1, \\[0.1em]
\delta_{ai}, \overline{\delta}_{ai} \in \{0,1\}, & & \forall a \in A, \hspace{0.05cm} \forall i=1,\ldots,n_a-1, \\[0.1em]
b_a, \overline{b}_a, t_{i}^a,\overline{t}_{i}^a, w_{ai}, \overline{w}_{ai},z \geq 0, & & \forall a \in A, \hspace{0.05cm} \forall i=1,\ldots,n_a-1.
\end{align}

\end{linearmodel}

Furthermore, we consider only networks that can be represented by two dimensional grid graphs $G_{r \times c}(V,E)$, where $r$ and $c$ are the number of rows and columns, respectively. $V$ is the set of nodes, $E$ is the set of edges, $|V|=n=rc$ and $|E|=m=2rc-r-c$. Grid graphs are a good representation of many real-world networks. An example is shown in Figure \ref{ggexample} with $n=12$ and $m=17$.

\begin{figure}[h!]
  \centering
    \includegraphics[width=0.35\textwidth]{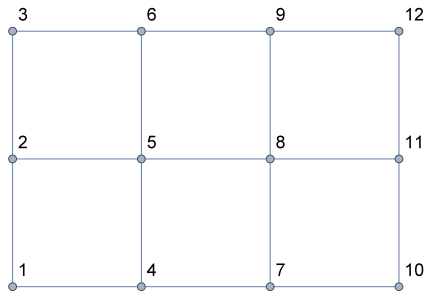}
  \caption{A grid graph $G_{3 \times 4}(V,E)$.}
  \label{ggexample}
\end{figure}

\subsection{Objective Function}

In terms of level of vehicular congestion, in real urban networks some streets or avenues can be more important than others. If this is the case, these important arteries must be explicitly prioritized in the model. This can be reached by giving different weights on every artery in the network. This discrimination of arteries will produce a decrease in the efficiency of the use of traffic lights in those less important in favour of those weighted more highly. Therefore, the calibration of these parameters must be careful and based on traffic volume statistics.

Let $A$ be the set of arteries on the network. We define $k_a$ and $\overline{k}_a$ as the weights for the outbound and inbound bandwidth at artery $a$, respectively. The objective function to be maximized is then:

\begin{equation*}
\sum_{\mathclap{a \in A}} \left( k_ab_{a}+\overline{k}_a\overline{b}_{a}\right).
\end{equation*}

\subsection{Arterial Constraints} \label{sarterialc}

As said before, we assume that all signals work into a common signal period whose length is introduced in the model as a decision variable. It must lie in an interval $[T_1,T_2]$, see $(\ref{lperiod})$. Decision variable $z$ is the reciprocal of the period length, that is, $z=1/T$. Inequalities $(\ref{greenc1})$-$(\ref{greenc2})$ ensure that the bandwidth remains within the green time. The velocities $v_i^a$ between each signal on each artery are decision variables and bounded, with $e_i^a$ and $f_i^a$ representing the lower and upper limits, respectively, and $d_i^a$ is the distance between two consecutive arteries, see $(\ref{vone})$-$(\ref{vtwo})$. In order to avoid sudden changes in the velocities between consecutive signals, they are limited by imposing lower and upper bounds $1/h_i^a$ and $1/g_i^a$ on changes in reciprocal velocities. The reason for using reciprocal bounds for velocities and changes in velocities is that linear constraints can be obtained in this way. It is not possible to consider directly the inequalities $e_i^a \leq v_i^a \leq f_i^a$ because the period length is also a variable. Thus, if we try to pass from $v_i^a$ in meters/second to $v_i^a$ in meters/period, we will have $e_i^aT \leq v_i^aT \leq f_i^aT$, which are nonlinear constraints. Therefore, we use the reciprocals of $e_i^a$ and $f_i^a$,

\begin{equation*}
e_i^a \leq v_i^a \leq f_i^a \quad \to \quad \frac{d_i^a}{f_i^a} \leq \frac{d_i^a}{v_i^a} \leq \frac{d_i^a}{e_i^a} \quad \to \quad \frac{d_i^a}{f_i^a}z \leq t_i^a \leq \frac{d_i^a}{e_i^a}z.
\end{equation*}

The same applies to the changes of velocities.\\

It can be seen in Figure \ref{ggm} that $Time_{\textsf{A-B}}=\Delta_{ai}+$integer number of periods$+\phi_{ij}^a$ and that $Time_{\textsf{A-B}}=$integer number of periods$-\overline{\phi}_{ij}^a+$integer number of periods$+\Delta_{aj}$. Therefore:

\begin{equation} \label{basic3}
\phi_{ij}^a+\overline{\phi}_{ij}^a+\Delta_{ai}-\Delta_{aj}=m_{ij}^a,
\end{equation}

\noindent where $m_{ij}^a$ is an integer decision variable (number of periods). Also, $Time_{\textsf{C-D}}=\phi_{ij}^a+\frac{1}{2}r_{aj}+w_{aj}+\tau_{aj}=\frac{1}{2}r_{ai}+w_{ai}+t_{ij}^a$ and $Time_{\textsf{E-F}}=\overline{\phi}_{ij}^a+\frac{1}{2}\overline{r}_{aj}+\overline{w}_{aj}=\frac{1}{2}\overline{r}_{ai}+\overline{w}_{ai}-\overline{\tau}_{ai}+\overline{t}_{ij}^a$. So, if we now substitute in (\ref{basic3}), we have that

\begin{equation} \label{basic4a}
\begin{multlined}
t_{ij}^a+\overline{t}_{ij}^a+\frac{1}{2}(r_{ai}+\overline{r}_{ai})+(w_{ai}+\overline{w}_{ai})-\frac{1}{2}(r_{aj}+\overline{r}_{aj})-(w_{aj}+\overline{w}_{aj})-(\tau_{aj}+\overline{\tau}_{ai})\\
+(\Delta_{ai}-\Delta_{aj})=m_{ij}^a.
\end{multlined}
\end{equation}

Equation (\ref{basic4a}) is named \emph{arterial loop constraint} for artery $a$ between signals $S_{ai}$ and $S_{aj}$.

\begin{figure}[h]
  \centering
    \includegraphics[width=0.8\textwidth]{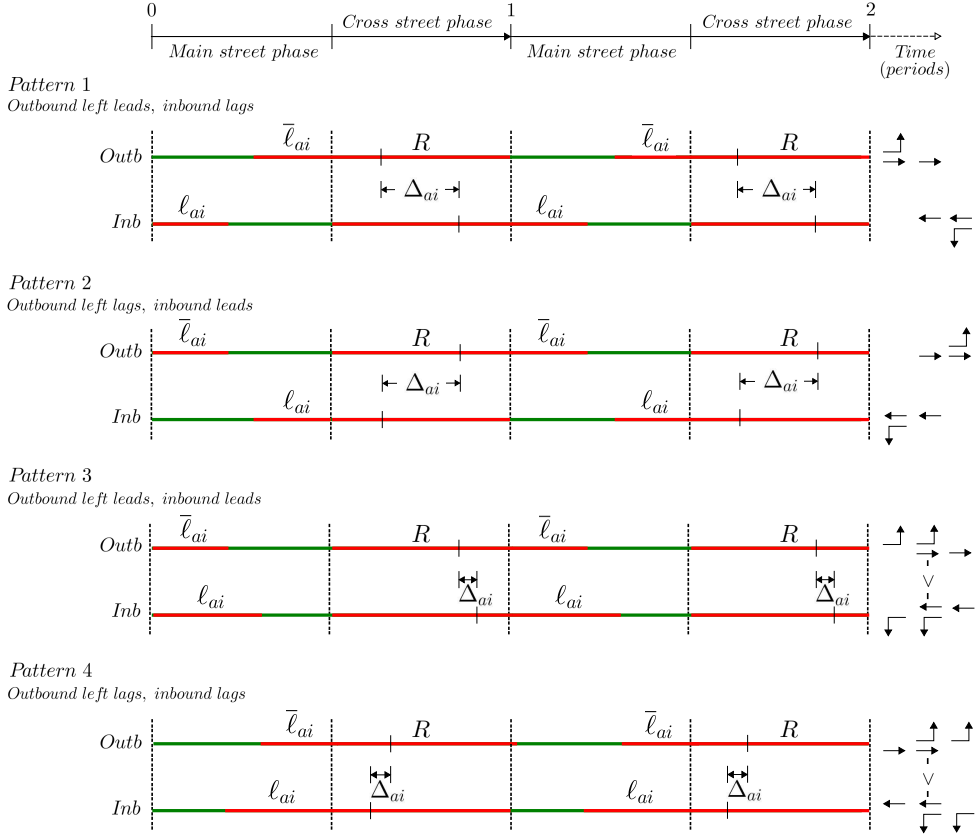}
  \caption{Patterns of left turn phases.}
  \label{ltpnew}
\end{figure}

In addition, there are constraints that model left turn decisions if it is allowed by green lights. The MAXBAND allows to decide among four possible patterns of left turns which can be seen in Figure \ref{ltpnew} (see \cite{Little81} for more details).

Parameters $\ell_{ai}$ and $\overline{\ell}_{ai}$ in Figure \ref{ltpnew} represent, for a signal $i$ on an artery $a$, the time (periods) of outbound and inbound left turn phases respectively. $R$ is the common red time. For instance, Figure \ref{ltpexample} shows the three possible movements for vehicles on a main street in three different moments. As can be seen at area 2, the traffic lights are green for outbound and inbound directions, so no car in the horizontal street can cross to the other side. On the common red time $R$ a possible different left turn pattern can be given for cross street.

\begin{figure}[h]
  \centering
    \includegraphics[width=0.5\textwidth]{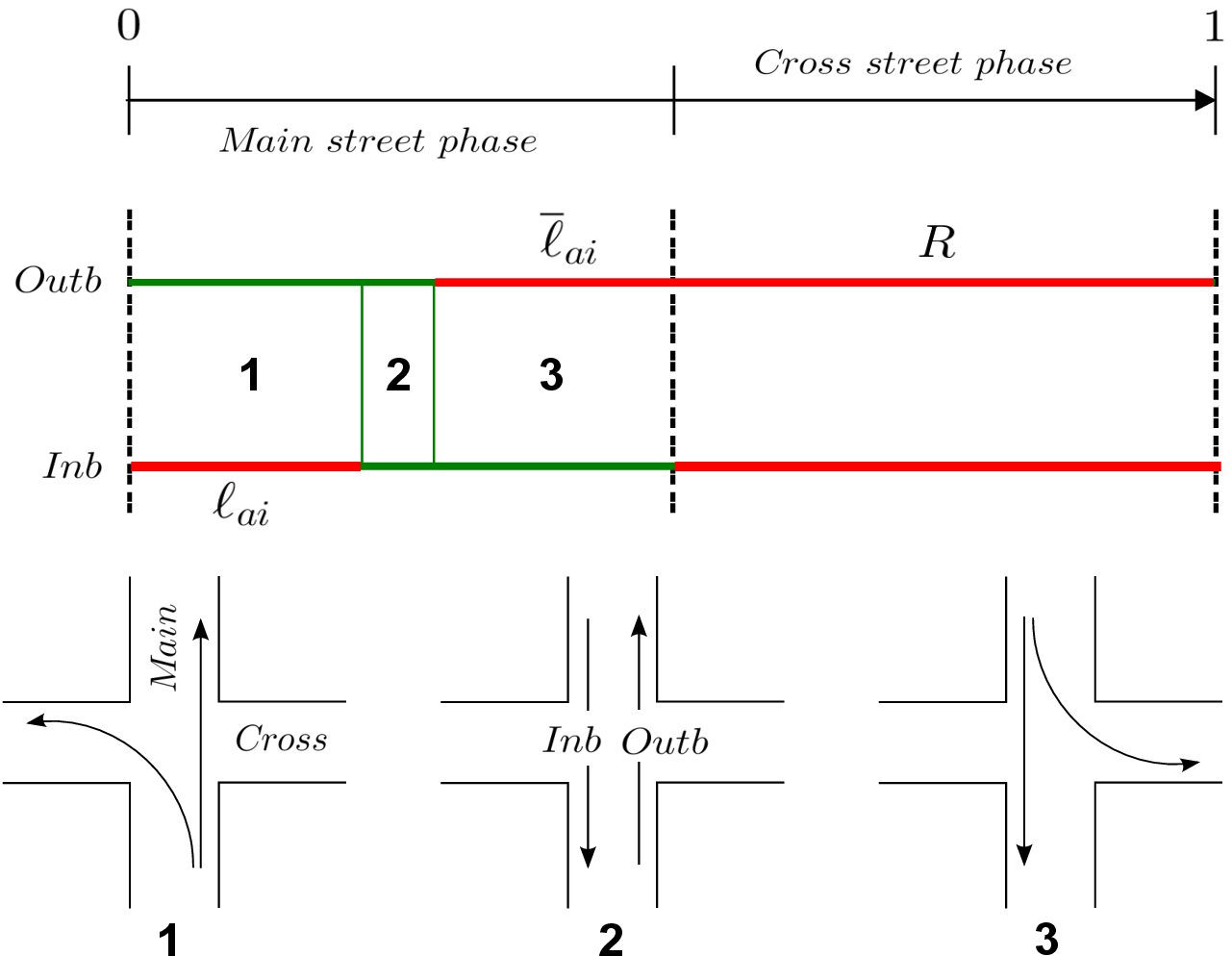}
  \caption{Left turn phase example with Pattern 1.}
  \label{ltpexample}
\end{figure}

Furthermore, $\Delta_{ai}$ can be expressed as a function of $\ell_{ai}$ and $\overline{\ell}_{ai}$. For example, if we consider Pattern 1 and we calculate the difference between the center of total red time of outbound and the total red of inbound (in that order), we have that

\begin{equation*}
\Delta_{ai}=\frac{\overline{\ell}_{ai}+R}{2}-\left( \frac{R+\ell_{ai}}{2}+\overline{\ell}_{ai} \right)=-\frac{\ell_{ai}+\overline{\ell}_{ai}}{2}.
\end{equation*}

The results for the other left turn phases are shown in Table \ref{deltas}. All these expressions can be obtained with the formula:

\begin{equation} \label{deltascomputation}
\Delta_{ai}=\frac{1}{2}[(2\delta_{ai}-1)\ell_{ai}-(2\overline{\delta}_{ai}-1)\overline{\ell}_{ai}],
\end{equation}

\noindent where $\delta_{ai}$, $\overline{\delta}_{ai}$ $\in$ $\set{0,1}$ are additional binary variables. The decisions on left turns are included in the model by substituting equation (\ref{deltascomputation}) in (\ref{basic4a}), as can be seen in constraint $(\ref{mbcompletec})$.

\begin{table}[h]
\caption{Expressions for $\Delta_{ai}$'s.}
\label{deltas}\centering

\renewcommand{\arraystretch}{2.4} 
\setlength{\tabcolsep}{1.5em} 
  \footnotesize
\begin{tabular}{cccc}
\toprule
$Pattern$ & $\Delta_{ai}$ & $\delta_{ai}$ & $\overline{\delta}_{ai}$ \\
\midrule
1 & $\displaystyle -\frac{\ell_{ai}+\overline{\ell}_{ai}}{2}$ & 0 & 1 \\
2 & $\displaystyle \frac{\ell_{ai}+\overline{\ell}_{ai}}{2}$ & 1 & 0 \\
3 & $\displaystyle -\frac{\ell_{ai}-\overline{\ell}_{ai}}{2}$ & 0 & 0 \\
4 & $\displaystyle \frac{\ell_{ai}-\overline{\ell}_{ai}}{2}$ & 1 & 1 \\
\bottomrule
\end{tabular}
\end{table}

\subsection{Loop Constraints}

The network case is a natural generalization of the arterial case and the corresponding model includes all the constraints for each artery as shown before. The arterial loop constraints (\ref{basic4a}) can be seen as a cycle for two nodes because it represents the movement of going to and returning from a signal. If we extend this idea to larger cycles, it is clear that the sum of all the offsets in the cycle must be an integer number as well.

\begin{figure}[h]
  \centering
    \includegraphics[width=0.25\textwidth]{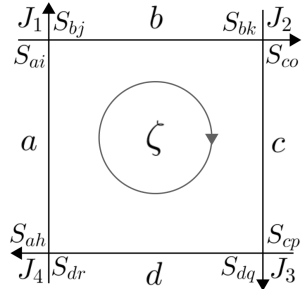}
  \caption{Clockwise loop with 4 junctions.}
  \label{c4j}
\end{figure}

In order to see how to write the equation of the cycle constraints, let us start with an example. A cycle with $4$ arteries $A=\{a,b,c,d\}$ and $4$ junctions $J=\{J_1,J_2,J_3,J_4\}$ is shown in Figure \ref{c4j}. Each artery $a \in A$ has signals $S_{aj}$, where $j$ is the index for signals on $a$ increasing in the outbound direction given by the arrows. Heading in the clockwise direction and starting from junction $J_1$, the cycle constraint for this example is: 

\begin{equation*}
\phi_{jk}^b+\Psi_{S_{bk},S_{co}}^{J_2}+\phi_{op}^c+\Psi_{S_{cp},S_{dq}}^{J_3}+\phi_{qr}^d+\Psi_{S_{dr},S_{ah}}^{J_4}+\phi_{hi}^a+\Psi_{S_{ai},S_{bj}}^{J_1}=C_{\zeta,}
\end{equation*}

\noindent where $C_{\zeta}$ is an integer decision variable and $\Psi_{S_{aj},S_{bk}}^i$ is a decision variable named \emph{intranode offset} which represents the time between consecutive centers of reds for signals $S_{aj}$ and $S_{bk}$ that meet at junction $i$, i.e, it is a link time between arteries $a$ and $b$.

In order to generalize the previous expression to any cycle, we define the following sets:

\begin{itemize}
\item $A_\zeta^F$ ($A_\zeta^B$): Set of all segments of forward (backward) arteries with edges $(i,j)$ in the clockwise direction of cycle $\zeta$,
\item $J_\zeta$: All sets of the form $(b,j,i,c,k)$ in $\zeta$, where $i$ is the junction between arteries~$b$ and $c$ in the signals $S_{bj}$ and $S_{ck}$,
\end{itemize}

Then, the \emph{network loop constraint} (cycle constraint) is: 

\begin{equation*}
\sum_{\mathclap{(i,j): a \in A_\zeta^F}} \phi_{ij}^a \hspace{0.2cm} - \hspace{0.2cm} \sum_{\mathclap{(i,j): a \in A_\zeta^B}} \phi_{ij}^a \hspace{0.2cm} + \hspace{0.2cm} \sum_{\mathclap{(b,j,i,c,k) \in J_\zeta}}\Psi_{S_{bj},S_{ck}}^i=C_{\zeta}.
\end{equation*}

The number of cycle constraints in the model will depend on how many edges and nodes the network has. Unfortunately, this number can be very high, which makes the problem very difficult to solve. The following result helps us to alleviate this problem: It is well known that the set of all cycles $\zeta$ on any single graph can be spanned by a basis $\mathcal{B}_\zeta$ with cardinality $m-n+1$, where $m$ represents the number of edges and $n$ the number of nodes on the underlying undirected graph related to the directed graph that represents the original network. See \cite{LandR} for full details. So, a cycle basis must be found before writing down the model.

\subsection{Computing Intranode Offset} \label{intranode}

In general, a grid graph $G_{k \times k}$ needs $(k-1)^2$ network loop constraints and therefore several intranodes must be computed for each of these equations. The values of intranode offsets $\Psi_{S_{aj},S_{bk}}^i$'s depend on red times positions of signals $S_{aj}$ and $S_{bk}$ on a main and cross street, respectively. For instance, if left turns are not permitted, then the red and green times for main and cross street will have the same length and in this case the intranode offset is clearly 0.5 periods. Since the MAXBAND model must decide among four different left turn patterns on each junction $i$, the computation of each intranode should take into account all the possible values of binary variables involved in that choice. The next result provides a simple expression to compute intranode offsets which does not require of extra variables beyond those used in model LM \ref{maxbandn}.

\begin{theorem}

Consider the patterns of left turn phases shown in Figure \ref{ltpnew}. Let $\psi_{mc}^{p_mp_c}$ be the value of $\Psi_{S_{mj},S_{ck}}^i$ when arteries $m$ and $c$  meet at junction $i$ for signals $S_{mj}$ and $S_{ck}$ with left turn phases patterns $p_m$ and $p_c$ respectively. Then, for all possible values of $p_m$ and $p_c$, we have that:

\begin{equation*}
\Psi_{S_{mj},S_{ck}}^i=\frac{1}{2}-\frac{1}{2}\left[(2\overline{\delta}_{ck}-1)\overline{\ell}_{ck}-(2\overline{\delta}_{mj}-1)\overline{\ell}_{mj}\right].
\end{equation*}

\end{theorem}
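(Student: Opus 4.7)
The plan is to decompose the computation of $\Psi_{S_{mj},S_{ck}}^i$ into two ingredients: the relative position of the two common-red centers at junction $i$, and the pattern-dependent offset of each signal's outbound red center from its own common red. The first ingredient will contribute the constant $\tfrac{1}{2}$ in the claimed formula and the second will yield the two bracketed terms. That only $\overline{\ell}$ (never $\ell$) appears in the correction reflects the fact that it is the inbound left-turn sub-phase $\overline{\ell}_{aj}$ whose position relative to $R$ shifts the outbound red block; the outbound sub-phase $\ell_{aj}$ shifts only the inbound red.

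First I would set up notation at junction $i$: let $R_m$ (resp.\ $R_c$) be the length of the sub-period during which artery $c$'s (resp.\ $m$'s) through traffic is blocked, and let $O_m$ (resp.\ $O_c$) denote its midpoint. Since these two sub-periods partition the signal period, $R_m+R_c=T$, and therefore $O_c-O_m=T/2$, which in the period-fraction units used throughout LM~\ref{maxbandn} equals exactly $1/2$. This pins down the leading $\tfrac{1}{2}$ in the claimed formula independently of the two left-turn patterns.

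Next, for each signal I would locate the center of its outbound red relative to its common red center. At $S_{mj}$ the outbound red block has total length $R_m+\overline{\ell}_{mj}$ and is contiguous: it consists of $R_m$ together with $\overline{\ell}_{mj}$ attached on whichever side of $R_m$ the pattern $p_m$ dictates. From Figure~\ref{ltpnew} together with the $\overline{\delta}$ column of Table~\ref{deltas}, $\overline{\delta}_{mj}=1$ corresponds to $\overline{\ell}_{mj}$ lying to the left of $R_m$ and $\overline{\delta}_{mj}=0$ to it lying to the right; in either case the center of the outbound red is $O_m-\tfrac{1}{2}(2\overline{\delta}_{mj}-1)\overline{\ell}_{mj}$. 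This is exactly the $\overline{\ell}$-contribution used in the derivation of equation~(\ref{deltascomputation}) for $\Delta_{mj}$, so I would cite that derivation rather than redo the four-pattern inspection. The identical argument on artery $c$ gives the outbound red center of $S_{ck}$ as $O_c-\tfrac{1}{2}(2\overline{\delta}_{ck}-1)\overline{\ell}_{ck}$.

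Finally, subtracting these two centers and substituting $O_c-O_m=1/2$ produces the claimed expression. The main obstacle is the previous step, namely making the "pattern-independent" form of the outbound-red shift rigorous without a sixteen-way enumeration over $(p_m,p_c)$. I intend to sidestep that enumeration by lifting the shift directly from the already-accepted equation~(\ref{deltascomputation}), which is built from exactly the same side-of-$R$ case analysis. Once that shift is in hand, the two-ingredient combination is routine and produces the formula uniformly for all sixteen combinations.
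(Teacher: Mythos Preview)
Your approach is correct and genuinely different from the paper's. The paper proves the theorem by brute force: it draws the geometry for every one of the sixteen $(p_m,p_c)$ combinations (Figure~\ref{psi}), tabulates the resulting offsets (Table~\ref{fpsi}), observes that only four distinct values occur, matches those four groups against the $\overline{\delta}$ columns of Table~\ref{deltas} (Table~\ref{ppsi}), and finally checks that the stated formula reproduces each group. Your argument instead factors $\Psi$ as the half-period separation $O_c-O_m=\tfrac{1}{2}$ of the two complementary common-red intervals plus, for each artery separately, the displacement of the outbound-red center from its own common-red center; since that displacement is precisely the $\overline{\ell}$-half of the already-established $\Delta$ formula~(\ref{deltascomputation}), you get the result without ever crossing the two pattern choices. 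This is shorter, explains structurally why only $\overline{\delta}$ and $\overline{\ell}$ appear, and makes the sixteen-case table unnecessary. What the paper's enumeration buys is that nothing is left to the reader: every case is visibly checked, whereas your argument leans on the reader accepting that the outbound-red shift can be cleanly isolated from~(\ref{deltascomputation}).

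One small slip to fix: in your setup you define $R_m$ as the sub-period during which artery $c$'s through traffic is blocked, but two sentences later you use it as the common red of artery $m$ (writing the outbound red of $S_{mj}$ as $R_m+\overline{\ell}_{mj}$). The latter is the paper's convention and is what the rest of your argument actually needs; just swap the subscripts in your opening definition so that $R_m$ is the common red of $m$ and $O_m$ its midpoint.
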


\begin{proof}

Let us consider Figure \ref{maincross}.

\begin{figure}[H]
  \centering
    \includegraphics[width=0.25\textwidth]{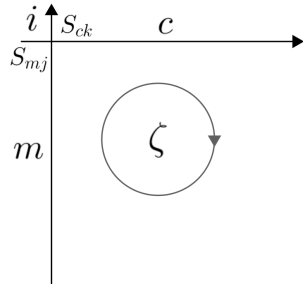}
  \caption{A junction $i$ of arteries main $(m)$ and cross $(c)$.}
  \label{maincross}
\end{figure}

Figure \ref{psi} shows the different forms that $\psi_{mc}^{p_mp_c}$ may have for all possible permutations of left turn phases in Figure \ref{ltpnew}. The cross street phase takes place during the red time $R_m$ in the main street and vice versa.

\begin{figure}[h]
  \centering
    \includegraphics[width=0.90\textwidth]{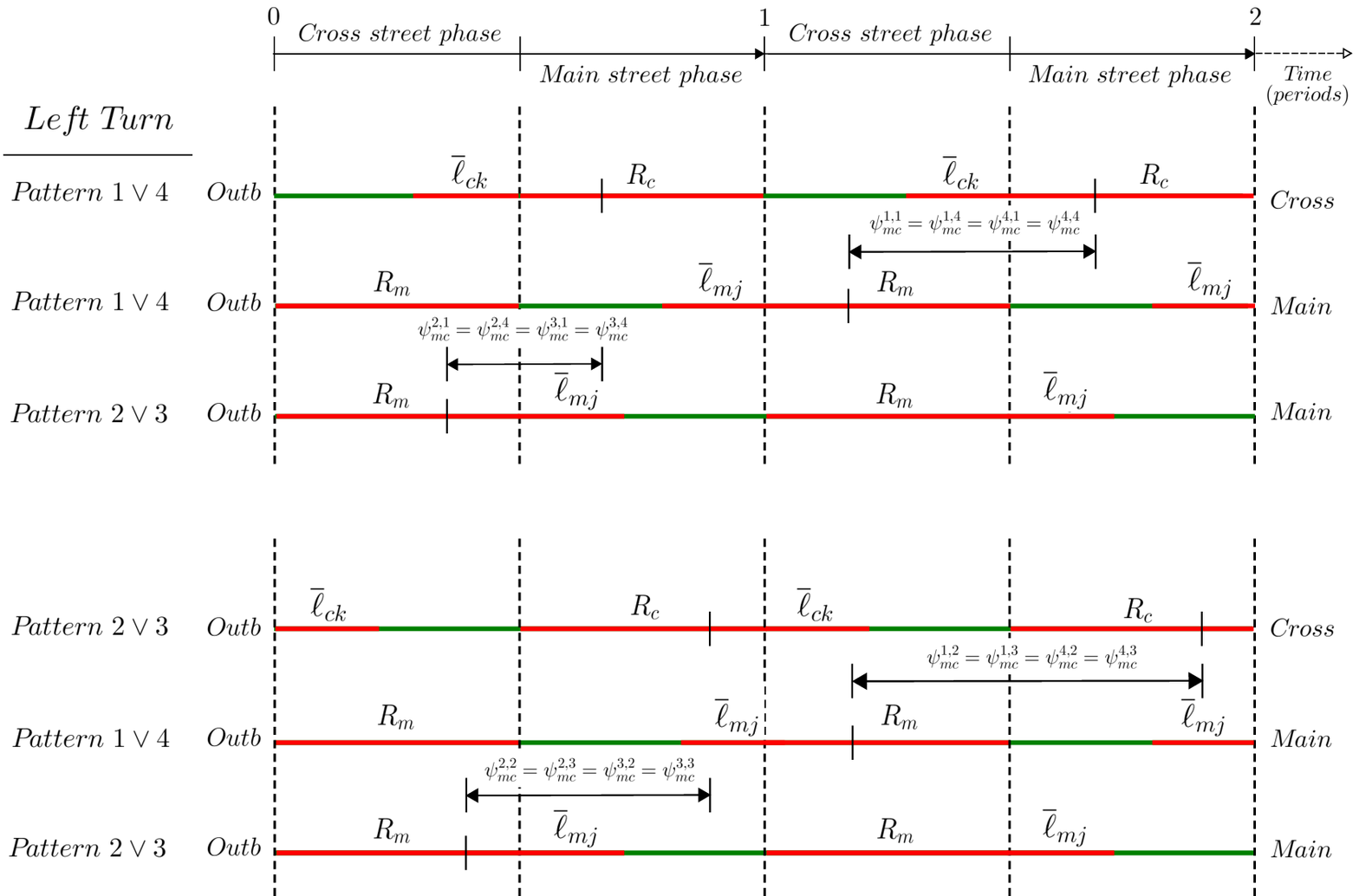}
  \caption{Geometry for $\displaystyle \psi_{mc}^{p_mp_c}$.}
  \label{psi}
\end{figure}

\begin{table}[h]
\caption{Expressions for $\psi_{mc}^{p_mp_c}$'s.}
\label{fpsi}\centering

\setlength{\tabcolsep}{8.5pt} 
\renewcommand{\arraystretch}{2.0} 
  \footnotesize
\begin{tabular}{cccccc}
\toprule
& & \multicolumn{4}{c}{Cross Street} \\
\cmidrule(l){3-6}
& Patterns & $1$ & $2$ & $3$ & $4$ \\
\midrule

\parbox[t]{1mm}{\multirow{4}{*}{\rotatebox[origin=c]{90}{Main Street}}}& $1$ & $\displaystyle \frac{1-\overline{\ell}_{ck}+\overline{\ell}_{mj}}{2}$ & $\displaystyle \frac{1+\overline{\ell}_{ck}+\overline{\ell}_{mj}}{2}$ & $\displaystyle \frac{1+\overline{\ell}_{ck}+\overline{\ell}_{mj}}{2}$ & $\displaystyle \frac{1-\overline{\ell}_{ck}+\overline{\ell}_{mj}}{2}$\\
& $2$ & $\displaystyle \frac{1-\overline{\ell}_{ck}-\overline{\ell}_{mj}}{2}$ & $\displaystyle \frac{1+\overline{\ell}_{ck}-\overline{\ell}_{mj}}{2}$ & $\displaystyle \frac{1+\overline{\ell}_{ck}-\overline{\ell}_{mj}}{2}$ & $\displaystyle \frac{1-\overline{\ell}_{ck}-\overline{\ell}_{mj}}{2}$ \\
& $3$ & $\displaystyle \frac{1-\overline{\ell}_{ck}-\overline{\ell}_{mj}}{2}$ & $\displaystyle \frac{1+\overline{\ell}_{ck}-\overline{\ell}_{mj}}{2}$ & $\displaystyle \frac{1+\overline{\ell}_{ck}-\overline{\ell}_{mj}}{2}$ & $\displaystyle \frac{1-\overline{\ell}_{ck}-\overline{\ell}_{mj}}{2}$\\
& $4$ & $\displaystyle \frac{1-\overline{\ell}_{ck}+\overline{\ell}_{mj}}{2}$ & $\displaystyle \frac{1+\overline{\ell}_{ck}+\overline{\ell}_{mj}}{2}$ & $\displaystyle \frac{1+\overline{\ell}_{ck}+\overline{\ell}_{mj}}{2}$ & $\displaystyle \frac{1-\overline{\ell}_{ck}+\overline{\ell}_{mj}}{2}$\\
\bottomrule
\end{tabular}
\end{table}

All values of $\psi_{mc}^{p_mp_c}$ are summarized in Table \ref{fpsi}. We take into account only the outbound directions phases because we are using only the $\phi$'s in equation $(\ref{cycleclm})$ of the MAXBAND model.

Also, in Table \ref{fpsi} it can be seen that:

\noindent $\displaystyle \psi_{mc}^{1,1}=\psi_{mc}^{1,4}=\psi_{mc}^{4,1}=\psi_{mc}^{4,4}=\frac{1-\overline{\ell}_{ck}+\overline{\ell}_{mj}}{2}$,
$\displaystyle \psi_{mc}^{2,1}=\psi_{mc}^{2,4}=\psi_{mc}^{3,1}=\psi_{mc}^{3,4}=\frac{1-\overline{\ell}_{ck}-\overline{\ell}_{mj}}{2}$,
$\displaystyle \psi_{mc}^{1,2}=\psi_{mc}^{1,3}=\psi_{mc}^{4,2}=\psi_{mc}^{4,3}=\frac{1+\overline{\ell}_{ck}+\overline{\ell}_{mj}}{2}$, and
$\displaystyle \psi_{mc}^{2,2}=\psi_{mc}^{2,3}=\psi_{mc}^{3,2}=\psi_{mc}^{3,3}=\frac{1+\overline{\ell}_{ck}-\overline{\ell}_{mj}}{2}$.
 
\begin{table}[h!]
\caption{Four different groups of $\displaystyle \psi_{mc}^{p_mp_c}$'s on junction $i$.}
\label{ppsi}\centering
\renewcommand{\arraystretch}{1.2}

\setlength{\tabcolsep}{0.4cm} 
  \footnotesize
\begin{tabular}{ccc>{\columncolor[gray]{0.9}}cc>{\columncolor[gray]{0.9}}cc}
\toprule
\multicolumn{2}{c}{Patterns}  &  &  &  &  &  \\
$p_m$ & $p_c$ & $\delta_{mj}$ & $\overline{\delta}_{mj}$ & $\delta_{ck}$ & $\overline{\delta}_{ck}$ & $\displaystyle \psi_{mc}^{p_mp_c}$ \\
\midrule
$4$&$4$&1&1&1&1&$\displaystyle \psi_{mc}^{4,4}$\\
$4$&$1$&1&1&0&1&$\displaystyle \psi_{mc}^{4,1}$\\
$1$&$4$&0&1&1&1&$\displaystyle \psi_{mc}^{1,4}$\\
$1$&$1$&0&1&0&1&$\displaystyle \psi_{mc}^{1,1}$\\
\midrule
$3$&$4$&0&0&1&1&$\displaystyle \psi_{mc}^{3,4}$\\
$2$&$1$&1&0&0&1&$\displaystyle \psi_{mc}^{2,1}$\\
$3$&$1$&0&0&0&1&$\displaystyle \psi_{mc}^{3,1}$\\
$2$&$4$&1&0&1&1&$\displaystyle \psi_{mc}^{2,4}$\\
\midrule
$1$&$2$&0&1&1&0&$\displaystyle \psi_{mc}^{1,2}$\\
$4$&$2$&1&1&1&0&$\displaystyle \psi_{mc}^{4,2}$\\
$4$&$3$&1&1&0&0&$\displaystyle \psi_{mc}^{4,3}$\\
$1$&$3$&0&1&0&0&$\displaystyle \psi_{mc}^{1,3}$\\
\midrule
$2$&$2$&1&0&1&0&$\displaystyle \psi_{mc}^{2,2}$\\
$2$&$3$&1&0&0&0&$\displaystyle \psi_{mc}^{2,3}$\\
$3$&$2$&0&0&1&0&$\displaystyle \psi_{mc}^{3,2}$\\
$3$&$3$&0&0&0&0&$\displaystyle \psi_{mc}^{3,3}$\\
\bottomrule
\end{tabular}
\end{table}
 
In Table \ref{ppsi} all different values of the binary variables $\delta$'s and $\overline{\delta}$'s are shown for each left turn phase that the model uses to compute the $\Delta$'s with equation (\ref{deltascomputation}). The $\psi_{mc}^{p_mp_c}$'s are arranged in four groups determined just for the values of $\overline{\delta}$'s on the signals  $S_{mj}$ and $S_{ck}$.

It is now easy to verify that a single expression to compute any $\Psi_{S_{mj},S_{ck}}^i$ is given by:

\begin{equation} \label{psiscomputation}
\Psi_{S_{mj},S_{ck}}^i=\frac{1}{2}-\frac{1}{2}\left[(2\overline{\delta}_{ck}-1)\overline{\ell}_{ck}-(2\overline{\delta}_{mj}-1)\overline{\ell}_{mj}\right].
\end{equation} 

\end{proof}

A similar result that considers pedestrian crossing times can be found in \cite{Chaudhary87}. However it must be noted the result that is shown here was developed independently.

\section{Bounds for Integer Variables}\label{s4}

\cite{Little66} provides bounds for the integer variables in the arterial loop constraints when left turns phases are not included. In this section we extend those bounds for the integer variables in the network loop constraints.

It is clear from Figure \ref{ggm} that $0 \leq w_{ai} \leq 1-r_{ai}$ and that $0 \leq \overline{w}_{ai} \leq 1-\overline{r}_{ai}$ for any signal $i$ on artery $a$. By using constraints (\ref{lperiod}), (\ref{vone}) and (\ref{vtwo}) we have that $\frac{d_{ij}^a}{f_{ij}^aT_2} \leq t_{ij}^a \leq  \frac{d_{ij}^a}{e_{ij}^aT_1}$ and that $\frac{d_{ij}^a}{\overline{f}_{ij}^aT_2} \leq \overline{t}_{ij}^a \leq   \frac{d_{ij}^a}{\overline{e}_{ij}^aT_1}$. Furthermore, equation (\ref{deltascomputation}) gives $ \Delta_{ai}=\delta_{ai}\ell_{ai}-\frac{\ell_{ai}}{2}-\overline{\delta}_{ai}\overline{\ell}_{ai}+\frac{\overline{\ell}_{ai}}{2}$. Therefore, we have that $- (\frac{\ell_{ai}}{2}+\frac{\overline{\ell}_{ai}}{2}) \leq \Delta_{ai} \leq (\frac{\ell_{ai}}{2}+\frac{\overline{\ell}_{ai}}{2})$, as $\delta$'s take values in $\{0,1\}$.

If we now use these bounds in constraints $(\ref{mbcompletec})$, then we have the following limits for $m$'s in terms of the MAXBAND model's parameters, $\underline{m_{ij}^a} \leq m_{ij}^a \leq \overline{m_{ij}^a}$, where:

\begin{equation} \label{Um}
\begin{multlined}
\overline{m_{ij}^a}  = \left \lfloor 2-\frac{1}{2}(r_{ai}+\overline{r}_{ai})-\frac{1}{2}(r_{aj}+\overline{r}_{aj})+\frac{1}{2}(\ell_{ai}+\overline{\ell}_{ai})+\frac{1}{2}(\ell_{aj}+\overline{\ell}_{aj}) \right.\\ \left. -(\tau_{aj}+\overline{\tau}_{ai})+\frac{d_{ij}^a}{e_{ij}^aT_1} +\frac{d_{ij}^a}{\overline{e}_{ij}^aT_1} \right \rfloor
\end{multlined},
\end{equation}

\begin{equation} \label{Lm}
\begin{multlined}
\underline{m_{ij}^a}  =  \left \lceil -2+\frac{1}{2}(r_{ai}+\overline{r}_{ai})+\frac{1}{2}(r_{aj}+\overline{r}_{aj})-\frac{1}{2}(\ell_{ai}+\overline{\ell}_{ai})-\frac{1}{2}(\ell_{aj}+\overline{\ell}_{aj}) \right.\\ \left. -(\tau_{aj}+\overline{\tau}_{ai})+\frac{d_{ij}^a}{f_{ij}^aT_2} +\frac{d_{ij}^a}{\overline{f}_{ij}^aT_2}  \right \rceil.
\end{multlined}
\end{equation}

Therefore, following the notation in LM \ref{maxbandn}, i.e., $m_{i,i+1}^a=m_{i}^a$, we have the following bounds:

\begin{equation} \label{limitsm}
\underline{m_{i}^a} \leq m_{i}^a \leq \overline{m_{i}^a},\quad \forall a \in A, \hspace{0.05cm} \forall i=1,\ldots,n_a-1.
\end{equation}

Furthermore, let $a \in A_\zeta^F$ be, as in previous sections, an artery with signals $S_{ai}$, where $i \in I_\zeta^a=\set{1_\zeta^a, \ldots , n_\zeta^a}$ and increasing in the outbound direction. The time between the first signal and the last one in the artery segment is:

\begin{equation} \label{tincycle}
t_{1_\zeta^a,n_\zeta^a}^a=\sum_{\mathclap{i \in I_\zeta^a \setminus \set{n_\zeta^a}}} t_i^a \hspace{0.2cm} - \hspace{0.2cm} \sum_{\mathclap{i \in I_\zeta^a \setminus \set{1_\zeta^a}}} \tau_{ai}.
\end{equation}

Therefore, $\underline{t_{1_\zeta^a,n_\zeta^a}^a} \leq t_{1_\zeta^a,n_\zeta^a}^a \leq \overline{t_{1_\zeta^a,n_\zeta^a}^a}$, where:

\begin{equation} \label{ulimitt}
\overline{t_{1_\zeta^a,n_\zeta^a}^a}=\sum_{\mathclap{i \in I_\zeta^a \setminus \set{n_\zeta^a}}} \hspace{0.3cm} \frac{d_{i}^a}{e_{i}^aT_1} \hspace{0.2cm} - \hspace{0.2cm} \sum_{\mathclap{i \in I_\zeta^a \setminus \set{1_\zeta^a}}} \tau_{ai},
\end{equation}

\begin{equation} \label{llimitt}
\underline{t_{1_\zeta^a,n_\zeta^a}^a}=\sum_{\mathclap{i \in I_\zeta^a \setminus \set{n_\zeta^a}}} \hspace{0.3cm} \frac{d_{i}^a}{f_{i}^aT_2} \hspace{0.2cm} - \hspace{0.2cm} \sum_{\mathclap{i \in I_\zeta^a \setminus \set{1_\zeta^a}}} \tau_{ai}.
\end{equation}

By using the same facts mentioned above and since $\phi_{ij}^a+\frac{1}{2}r_{aj}+w_{aj}+\tau_{aj}=\frac{1}{2}r_{ai}+w_{ai}+t_{ij}^a$, we obtain that $\underline{\phi_{i_\zeta^a,n_\zeta^a}^a} \leq \phi_{i_\zeta^a,n_\zeta^a}^a \leq \overline{\phi_{i_\zeta^a,n_\zeta^a}^a}$, with:

\begin{equation} \label{Uphi}
\overline{\phi_{i_\zeta^a,n_\zeta^a}^a} = -\frac{1}{2}(r_{a,1_\zeta^a}+r_{a,n_\zeta^a})+\overline{t_{1_\zeta^a,n_\zeta^a}^a}+1,
\end{equation}

\begin{equation} \label{Lphi}
\underline{\phi_{i_\zeta^a,n_\zeta^a}^a} = \frac{1}{2}(r_{a,1_\zeta^a}+r_{a,n_\zeta^a})+\underline{t_{1_\zeta^a,n_\zeta^a}^a}-1.
\end{equation}

According to equation (\ref{psiscomputation}), we also have that $\underline{\Psi_{S_{bj},S_{ck}}^i} \leq \Psi_{S_{bj},S_{ck}}^i \leq\overline{\Psi_{S_{bj},S_{ck}}^i}$:

\begin{equation} \label{UPsi}
\overline{\Psi_{S_{bj},S_{ck}}^i} = \frac{1}{2}(1+\overline{\ell}_{ck}+\overline{\ell}_{bj}),
\end{equation}

\begin{equation} \label{LPsi}
\underline{\Psi_{S_{bj},S_{ck}}^i} = \frac{1}{2}(1-\overline{\ell}_{ck}-\overline{\ell}_{bj}).
\end{equation}

Finally, bounds for $C_\zeta$ in the set of constrains (\ref{cycleclm}) are:

\begin{equation} \label{UC}
\overline{C_\zeta}=\left \lfloor  \hspace{0.4cm} \sum_{\mathclap{a \in A_\zeta^F}} \overline{\phi_{i_\zeta^a,n_\zeta^a}^a} \hspace{0.2cm} - \hspace{0.2cm} \sum_{\mathclap{a \in A_\zeta^B}} \underline{\phi_{i_\zeta^a,n_\zeta^a}^a} \hspace{0.2cm} + \hspace{0.2cm} \sum_{\mathclap{(b,j,i,c,j) \in J_\zeta}}\overline{\Psi_{S_{bj},S_{ck}}^i} \right \rfloor,
\end{equation}

\begin{equation} \label{LC}
\underline{C_\zeta}=\left \lceil \hspace{0.4cm} \sum_{\mathclap{a \in A_\zeta^F}} \underline{\phi_{i_\zeta^a,n_\zeta^a}^a} \hspace{0.2cm} - \hspace{0.2cm} \sum_{\mathclap{a \in A_\zeta^B}} \overline{\phi_{i_\zeta^a,n_\zeta^a}^a} \hspace{0.2cm} + \hspace{0.2cm} \sum_{\mathclap{(b,j,i,c,j) \in J_\zeta}}\underline{\Psi_{S_{bj},S_{ck}}^i} \right \rceil.
\end{equation}

The next set of constraints can be added to the linear model:

\begin{equation} \label{limitsC}
\underline{C_\zeta} \leq C_\zeta \leq \overline{C_\zeta},\quad \forall \zeta \in \mathcal{B}_\zeta.
\end{equation}

All these limits will be used for our computational experiments in the next section, in order to reduce the space of search of values of the integer variables.

\section{An MILP-Based Heuristic with Tabu Search for MAXBAND} \label{TSmethod}

In this section we propose a metaheuristic algorithm to solve the MAXBAND model with three variants. Then, we perform computational experiments and show the results.

On a network, the MAXBAND model requires a large number of initial data as well as several variables defined on each segment of an artery, signals and cycles in the cycle basis. Even though the instances presented in Table \ref{sizesP} may seem not very large, the formulation has many equalities which contain integer and binary variables. This makes the problem difficult to solve. 

\begin{table}[h]
  \centering
  \caption{Sizes of some MAXBAND problems.}
    \label{sizesP} \centering
    \renewcommand{\arraystretch}{1.2}
  \footnotesize    \begin{tabular}{rccccccccc}
    \toprule
    \multicolumn{1}{c}{} & \multicolumn{8}{c}{Grid Graph $G_{r \times c}$} \\
    \cmidrule(l){2-10}
    \multicolumn{1}{c}{} & 3x3   & 5x5   & 6x6   & 7x7   & 8x8   & 9x9   & 10x10 & 15x15 & 20x20 \\
    \midrule
    \multicolumn{1}{l}{Equalities} & 16    & 56    & 85    & 120   & 161   & 208   & 261   & 616   & 1121 \\
    \multicolumn{1}{l}{$m$'s} & 12    & 40    & 60    & 84    & 112   & 144   & 180   & 420   & 760 \\
    \multicolumn{1}{l}{$C$'s} & 4     & 16    & 25    & 36    & 49    & 64    & 81    & 196   & 361 \\
    \multicolumn{1}{l}{$\delta$'s} & 36    & 100   & 144   & 196   & 256   & 324   & 400   & 900   & 1600 \\
    \multicolumn{1}{l}{Integer variables} & 52    & 156   & 229   & 316   & 417   & 532   & 661   & 1516  & 2721 \\
    \bottomrule
    \end{tabular}%
\end{table}%

As will be seen later, a commercial MILP solver is not able to find the optimum for instances greater than $6\times6$ grid graph within a reasonable time (less than 3 hours). Therefore the use of heuristic methods is a logical alternative to the problem we are studying.	

One of the methods of solution for timing traffic lights is that used by TRANSYT, a commercial software that bases its solution on hill-climbing, evolutionary algorithm methods and simulation for modelling the behaviour and interactions of traffic flow. In \cite{Ratrout2014} and \cite{Lu2014} TRANSYT was compared with other similar software and showed to be more efficient than its competitors in terms of performance index (for example, number of vehicle stops) and determination of the common period length (red plus green light) for each signal on the network. In fact, some authors compare the results of their proposed approaches with those obtained using TRANSYT, see \cite{ThesisW} as an example. Regrettably, TRANSYT methodology has not been developed to be adapted to a problem of bandwidth maximization and therefore it is not an option to solve the MAXBAND model. This approach optimizes, instead of the bandwidth, a very complete objective function that involves delay, stops, fuel consumption, etc. thus most of the MAXBAND constraints are explicitly defined in it.

A heuristic alternative for MAXBAND was proposed by \cite{GartnerNew}. Their approach exploits the intrinsic geometry on the network. One iteration of this method consists of two general stages. The first one solves a subproblem from the original one with standard MILP techniques. The subproblem is a tree which is chosen because it contains no cycles. Moreover, this tree is not selected randomly as it considers either the streets with higher volume of traffic flow or some other measure of interest. This is why this tree is called \emph{priority arterial subnetwork}. After solving this reduced problem, the integer variables are fixed to the values obtained and used in a second stage to solve the whole problem. The procedure can be repeated if further improvements are required. If the original problem has a large size (dense when using all MAXBAND constraints), then the priority tree is difficult to solve, even though no integer cycle variables are used, the remaining variables could still be many. In fact, in \cite{GartnerNew} only two real but small cases were solved, the largest one is an incomplete $4\times4$ grid graph.

The method we propose is an adaptation of heuristic methods applicable for MAXBAND to try to solve larger instances.

The use of simulation helps the construction of multiple independent instances of a problem to obtain multiple initial feasible solutions, as required in evolutionary algorithms and how it is used in TRANSYT. Instead, the procedure we propose is based on a single feasible solution, as do heuristics such as Simulated Annealing \citep{Kirkpatrick1983} and Tabu Search \citep{GloverTS1}. We have decided to use a Tabu Search approach because its structure exploits intelligent strategies based on learning procedures that allows a guided search towards the optimal solution. Tabu Search uses a memory structure to generate new solutions from an initial one. These will compete among each other in each iteration of the procedure. Due to memory structure, changes applied in a solution to generate another one are considered in future iterations which, in favour of the diversity of the solutions, we would not like to repeat often. This avoids frequently repeating very complicated problems to be solved within the procedure. We use the variable fixing ideas to obtain new solutions in a short time. The method also involves a local search procedure to intensify the search of good solutions. It is done in three different ways that will be mentioned later.

\subsubsection*{The MILP-Based Heuristic}

As our algorithm is based on tabu search therefore it requires an initial solution whose neighbourhood needs to be explored. Despite many attempts to find a systematic procedure to generate the initial solution, this was not possible (due to the high number of equalities involved). Instead, we take as starting solution the first feasible solution found by the optimization solver (Xpress).

Next, we consider the set of the variables $m$'s ($2rc-r-c$), $\delta$'s ($4rc$) and $C$'s ($r(c-1)-c+1$) that must take integer values in the optimal solution. A number of $rm$, $rd$ and $rC$ variables are chosen randomly from them respectively to be modified later with one of the following procedures. The rest of these integer variables are fixed to the values that they have in the initial solution.

\begin{itemize}

\item TSILP-LSF: The values of $rm$, $rd$ and $rC$ variables are fixed to values within the bounds (\ref{limitsm}), $\set{0,1}$ and (\ref{limitsC}) respectively.

\item TSILP-LSU: The $rm$, $rd$ and $rC$ variables with values given by a solution become variables again (i.e., unfixed).

\item TSILP-LSVNS: TSILP-LSF and TSILP-LSU are applied one after the other.

\end{itemize}

The current problem is then solved with Xpress to obtain a new feasible neighbour solution. This is repeated in order to generate a set of solutions (\emph{candidate list}) of size \emph{SizeList}. The list of candidates may contain many infeasible solutions when using TSILP-LSF. If we unfix some variables, the number of infeasible solutions is reduced considerably. Indeed, we have used TSILP-LSU to generate the candidate list. Additionally, in our experience, if a memory structure is applied, procedures such as release (unfix) variables and solve the problem that remains, give a diverse enough set of solutions. Thus, TSILP-LSU is applied on the selected variables $rm$, $rd$ and $rC$ only if it is not forbidden by a \emph{tabu list}. This list is an array that contains $tt$ values for each variable which represent the number of iterations that they can not be modified. Then, this is sorted in decreasing order by the objective value (i.e., best first) and the the first solution becomes \emph{current solution}. It is saved and the \emph{tabu list} is updated for each integer variable. The last is done by decreasing $tt$ values if they are different from zero and by fixing them to a value $maxtt$ otherwise. Subsequently a greedy local search is applied to the \emph{current solution}. This is simply an iterative application of TSILP-LSVNS (Algorithm \ref{LSVNS}), TSILP-LSF (Algorithm \ref{LSF}) or TSILP-LSU (Algorithm \ref{LSU}). Finally this can be repeated until a maximum number of iterations is met. See Algorithm \ref{TSILP}.

\vspace{0.2cm}

VNS in TSILP-LSVNS stands for variable neighbourhood search, see \cite{VNS}. As can be seen later, the combination of fixing and unfixing random integer variables performs better.

\vspace{0.4cm}

\begin{algorithm}[h]
\caption{VNS Local Search Procedure (LSVNS).}
\label{LSVNS}
\vspace{0.50 cm}
\footnotesize

Let:\\
$S:$ A problem with best objective function value on a \emph{candidate list}.\\

Step 1. Choose $rm$, $rd$ and $rC$ from $m$'s, $\delta$'s and $C$'s on $S$.\\
Step 2. If $tt=0$, fix their values with random numbers within their bounds.\\
Step 3. Solve the instance using an LP solver (Xpress) and set this solution as a \emph{current solution}.\\
Step 4. Choose other integer variables $rm$, $rd$ and $rC$ from $m$'s, $\delta$'s and $C$'s on the \emph{current solution}.\\
Step 5. If $tt=0$, these variables are unfixed.\\
Step 6. Solve the instance using branch and bound (Xpress).\\
Step 7. Update the \emph{current solution} only if it is better than the previous one.\\
Step 8. Repeat the process until the maximum number of iterations is reached.\\
\end{algorithm}

\begin{algorithm}[h]
\caption{Fix local search procedure (LSF).}
\label{LSF}
\vspace{0.3 cm}
\footnotesize
Let:\\
$S:$ A problem with best objective function value on a \emph{candidate list}.\\

Steps 1 \ldots 3 and 7 \dots 8 are as in LSVNS.\\

\end{algorithm}

\begin{algorithm}[h]
\caption{Unfix local search procedure (LSU).}
\label{LSU}
\vspace{0.3 cm}
\footnotesize
Let:\\
$S:$ A problem with best objective function value on a \emph{candidate list}.\\

Steps 4 \ldots 6 and 7 \dots 8 are as in LSVNS.\\

\end{algorithm}

\begin{algorithm}[h]
\caption{Tabu Search for MAXBAND (TSILP-LS/F/U/VNS).}
\label{TSILP}
\vspace{0.50 cm}
\footnotesize

Let:\\
$P$: A complete MAXBAND problem on a grid graph.\\  

Step 1. For all $m$, $\delta$ and $C$, $tt=0$ in a \emph{tabu list}.\\ 
Step 2. Find the first integer solution for $P$ by Xpress and set it as \emph{current solution}.\\
Step 3. Create a \emph{candidate list} of size \emph{SizeList}:\\
\hspace*{1 cm} 1. Choose randomly $rm$, $rd$ and $rC$ from $m$'s, $\delta$'s and $C$'s in \emph{current solution}.\\
\hspace*{1 cm} 2. If $tt=0$, unfix these variables.\\
\hspace*{1 cm} 3. Solve the problem using branch and bound (Xpress).\\
\hspace*{1 cm} 4. Repeat \emph{SizeList} times.\\
Step 4. Sort the $candidate$ $list$ in decreasing order of the objective function value.\\
Step 5. Apply procedure LSF, LSU or LSVNS with the best first in the candidate list as input.\\
Step 6. Set the \emph{current solution} as the best solution in the \emph{candidate list}.\\
Step 7. Update the \emph{tabu list} for each $m$, $\delta$ and $C$ on the \emph{current solution}:\\
\hspace*{1 cm} 1. If current $tt=0$ then $tt=maxtt$, else\\
\hspace*{1 cm} 2. $tt=tt-1$.\\
Step 8. Repeat Steps $3-7$ until a maximum number of iterations is reached.\\

\end{algorithm}

\newpage

\subsection{Computational Results} \label{crTS}

Because we did not have access to real case information we have generated some artificial data as described below. The intervals of random values are based on the small example of five arteries and seven signals provided by \cite{Little66} which was solved with branch and bound by hand. All random data take the same values for outbound and inbound direction. It must be noted that even small grid graphs are very dense.

\vspace{0.5cm}

Let $U(a,b)$ be a continuous uniform distribution on interval $(a,b)$,

\vspace{0.2cm}

\begin{itemize}
\item The lengths of the arcs of the grid follow a distribution $U(140,600)$ (meters). 
\item Red times $r$ follow a distribution $U(0.4,0.6)$ (periods). 
\item Times to turn left $\ell$ follow a distribution  $U(0.25r,0.38r)$ (seconds). 
\item Min/max common period $T_{min}$/$T_{max}$, follows a distribution  $U(40,60)$/$U(90,110)$  (seconds).
\item Limits of velocities lower/upper $e$/$f$ follow a distribution $U(12,14)$/$U(15,16)$ (meters/second). 
\item Limits on changes in reciprocal speed lower/upper $1/h$/$1/g$ $=0.012$/$-0.012$ (meters/second)$^{-1}$.
\item All $\tau_{ai}$'s and $\overline{\tau}_{ai}$'s were set to 0.
\item All weights on the objective function were set to 1.
\end{itemize}

\vspace{0.3cm}

We used a PC Intel(R) Xeon(R) 3.40GHz $16.0$ (RAM). Strictly fundamental cycle basis \citep{LiebC,LandR} for each graph $G_{r \times c}$ were found with Mathematica version $10.1$. The algorithms were coded with Xpress Mosel version $3.4.2$ and the solver used was Xpress Optimizer version $24.01.04$.

\vspace{0.2cm}

Table \ref{tableFINALsmall} shows the results for several small grid graph instances generated randomly considering all parameters and variables in model \ref{maxbandn} including the bounds (\ref{limitsm}) and (\ref{limitsC}).

\begin{table}[ht!]
  \centering
  \caption{Computational Results for TSILP procedure (small instances).}
  \label{tableFINALsmall}%
    \footnotesize
\renewcommand{\arraystretch}{1.5} 
    \begin{tabular}{r@{\extracolsep{0.3cm}}r@{\extracolsep{0.3cm}}c@{\extracolsep{0.1cm}}r@{\extracolsep{0.3cm}}r@{\extracolsep{0.1cm}}r@{\extracolsep{0.1cm}}r@{\extracolsep{0.3cm}}r@{\extracolsep{0.1cm}}r@{\extracolsep{0.1cm}}r@{\extracolsep{0.1cm}}r@{\extracolsep{0.4cm}}r@{\extracolsep{0.1cm}}r@{\extracolsep{0.1cm}}r@{\extracolsep{0.1cm}}r@{\extracolsep{0.4cm}}r@{\extracolsep{0.1cm}}r@{\extracolsep{0.1cm}}r@{\extracolsep{0.1cm}}r@{\extracolsep{0.4cm}}r@{\extracolsep{0.1cm}}r@{\extracolsep{0.1cm}}r@{\extracolsep{0.1cm}}r}
    \toprule
         &    &  \multicolumn{2}{c}{Exact}    & \multicolumn{3}{c}{Global} & \multicolumn{4}{c}{Global/LS} & \multicolumn{4}{c}{TSILP-LSF} & \multicolumn{4}{c}{TSILP-LSU} & \multicolumn{4}{c}{TSILP-LSVNS}\\
    \cmidrule(){3-4}
    \cmidrule(){5-7}
    \cmidrule(){8-11}
    \cmidrule(){12-15}
    \cmidrule(){16-19}
    \cmidrule(){20-23} 

size & \# & OF* & t & iter & sl & tt & iLS & rm & rd & rC & avg & worst & best & avgt & avg & worst & best & avgt & avg & worst & best & avgt\\
    \midrule
    3x3     & 1   & 3.22 & 0 & 10    & 5     & 3     & 5     & 2     & 2     & 2     & 2.73  & 2.71  & 2.91  & 6     & 3.01  & 3.01  & \underline{3.02}  & 7     & 3.00  & 2.83  & \underline{3.02}  & 9 \\
          &       &       &       & 10    & 5     & 3     & 10    & 2     & 2     & 2     & 2.72  & 2.71  & 2.81  & 8     & 3.01  & 3.01  & \underline{3.02}  & 10    & 3.02  & 3.01  & \underline{3.02}  & 15 \\
          &       &       &       & 30    & 10    & 3     & 10    & 4     & 4     & 4     & 2.71  & 2.71  & 2.76  & 33    & 3.02  & \underline{3.02}  & \underline{3.02}  & 45    & 3.02  & \underline{3.02}  & \underline{3.02}  & 59 \\
          &       &       &       & 50    & 10    & 3     & 20    & 4     & 4     & 4     & 2.71  & 2.71  & 2.71  & 73    & 3.02  & \underline{3.02}  & \underline{3.02}  & 123   & 3.02  & \underline{3.02}  & \underline{3.02}  & 174 \\
\cmidrule{5-23}         & 2   & 3.80 & 0 & 10    & 5     & 3     & 5     & 2     & 2     & 2     & 2.61  & 1.24  & 3.76  & 5     & 3.48  & 1.38  & \underline{3.80}  & 6     & 3.79  & 3.67  & \underline{3.80}  & 8 \\
          &       &       &       & 10    & 5     & 3     & 10    & 2     & 2     & 2     & 3.24  & 1.24  & 3.77  & 10    & 3.78  & 3.72  & \underline{3.80}  & 9     & 3.80  & 3.76  & \underline{3.80}  & 13 \\
          &       &       &       & 30    & 10    & 3     & 10    & 4     & 4     & 4     & 3.69  & 3.69  & 3.69  & 31    & 3.69  & 3.69  & 3.69  & 39    & 3.69  & 3.69  & 3.69  & 52 \\
          &       &       &       & 50    & 10    & 3     & 20    & 4     & 4     & 4     & 3.69  & 3.69  & 3.69  & 72    & 3.69  & 3.69  & 3.69  & 102   & 3.69  & 3.69  & 3.69  & 145 \\
    \midrule
    5x5     & 3   & 4.77 & 7 & 10    & 5     & 3     & 5     & 2     & 2     & 2     & 3.57  & 2.63  & 4.16  & 11    & 4.05  & 3.43  & 4.31  & 14    & 4.11  & 3.73  & 4.29  & 17 \\
          &       &       &       & 10    & 5     & 3     & 10    & 2     & 2     & 2     & 3.86  & 3.29  & 4.06  & 14    & 4.08  & 3.52  & 4.30  & 17    & 4.17  & 3.83  & 4.35  & 19 \\
          &       &       &       & 30    & 10    & 3     & 10    & 4     & 4     & 4     & 4.16  & 4.01  & 4.41  & 55    & 4.31  & 4.18  & \underline{4.73}  & 96    & 4.36  & 4.18  & \underline{4.73}  & 117 \\
          &       &       &       & 50    & 10    & 3     & 20    & 4     & 4     & 4     & 4.14  & 3.94  & 4.18  & 145   & 4.33  & 3.95  & 4.71  & 257   & 4.42  & 4.23  & 4.72  & 332 \\
\cmidrule{5-23}         & 4   & 5.20 & 6 & 10    & 5     & 3     & 5     & 2     & 2     & 2     & 3.88  & 3.75  & 3.98  & 7     & 4.09  & 3.84  & 4.43  & 11    & 4.24  & 3.98  & 4.49  & 11 \\
          &       &       &       & 10    & 5     & 3     & 10    & 2     & 2     & 2     & 3.91  & 3.74  & 4.11  & 14    & 4.25  & 3.78  & 4.77  & 22    & 4.21  & 4.11  & 4.26  & 29 \\
          &       &       &       & 30    & 10    & 3     & 10    & 4     & 4     & 4     & 4.47  & 3.97  & 4.61  & 58    & 4.77  & 4.66  & \underline{4.86}  & 84    & 4.74  & 4.46  & \underline{4.86}  & 120 \\
          &       &       &       & 50    & 10    & 3     & 20    & 4     & 4     & 4     & 4.45  & 3.97  & 4.61  & 145   & 4.78  & 4.66  & \underline{4.86}  & 260   & 4.83  & 4.77  & \underline{4.86}  & 326 \\
    \midrule
    6x6     & 5   & 5.18 & 116 & 10    & 5     & 3     & 5     & 2     & 2     & 2     & 3.11  & 2.01  & 3.52  & 15    & 3.51  & 3.28  & 3.94  & 17    & 3.54  & 3.18  & 3.97  & 20 \\
          &       &       &       & 10    & 5     & 3     & 10    & 2     & 2     & 2     & 3.12  & 2.49  & 3.48  & 21    & 3.49  & 3.07  & 3.79  & 33    & 3.48  & 3.07  & 3.86  & 39 \\
          &       &       &       & 30    & 10    & 3     & 10    & 4     & 4     & 4     & 3.58  & 3.38  & 3.73  & 85    & 4.07  & 3.57  & 4.24  & 128   & 4.16  & 3.91  & 4.24  & 152 \\
          &       &       &       & 50    & 10    & 3     & 20    & 4     & 4     & 4     & 3.73  & 3.49  & 4.18  & 190   & 4.23  & 3.93  & \underline{4.46}  & 345   & 4.31  & 4.12  & \underline{4.46}  & 420 \\
\cmidrule{5-23}         & 6   & 4.74 & 1270 & 10    & 5     & 3     & 5     & 2     & 2     & 2     & 3.35  & 1.63  & 4.16  & 12    & 4.18  & 3.92  & 4.31  & 17    & 4.16  & 3.92  & 4.31  & 19 \\
          &       &       &       & 10    & 5     & 3     & 10    & 2     & 2     & 2     & 3.57  & 1.63  & 4.15  & 17    & 4.19  & 3.74  & 4.35  & 25    & 4.26  & 4.10  & 4.35  & 36 \\
          &       &       &       & 30    & 10    & 3     & 10    & 4     & 4     & 4     & 4.24  & 4.17  & 4.33  & 92    & 4.32  & 4.14  & \underline{4.37}  & 128   & 4.36  & 4.35  & \underline{4.37}  & 152 \\
          &       &       &       & 50    & 10    & 3     & 20    & 4     & 4     & 4     & 4.28  & 4.22  & 4.33  & 193   & 4.36  & 4.35  & \underline{4.37}  & 274   & 4.36  & 4.35  & \underline{4.37}  & 295 \\
    \bottomrule
    \end{tabular}%
\end{table}%

\newpage

The meanings of the headers are as follows:

\begin{itemize}
      \item size: size of the problem.
      \item \#: instance number.
      \item Exact (Xpress branch-and-bound).
      \begin{itemize}
      \item OF*: optimal value of the objective function. 
      \item t: running time (seconds).
      \end{itemize}
	  \item Global (stands for whole procedure).
	  \begin{itemize}	        
      \item iter: number of tabu search iterations.
      \item sl: size list.
      \item tt: tenure time in the memory list (iterations).
      \end{itemize}      
	  \item Global/LS (stands for whole and local search procedures).
	  \begin{itemize}	        
      \item iLS: number of iterations of local search.
      \item rm, rd, rC: number of variables $m$'s, $\delta$'s and $C$'s chosen for the candidate list and the local search.
	  \end{itemize}
	  \item TSILP-(LSF, LSU, LSVNS).	
	  \begin{itemize}      
      \item avg, worst, best: Average, worst and best case objective function value.
      \item avgt: average time (seconds).
      \end{itemize}
\end{itemize}

Each problem has been solved ten times with four different parameters settings. The same number of integer variables $rm$, $rd$ and $rC$ were used in both of them to create a candidate list and to run the local search procedure. The best objective function values obtained among the different heuristic algorithms are underlined.

For each instances the optimal objective function could be obtained by Xpress in short time. TSILP-LSU and TSILP-LSVNS could meet the optimal for problem \#2 and both algorithms performed almost the same in the most cases. See Figure \ref{boxplot5} as an example, red dots represents the mean OF values.

\begin{figure}[ht!]
  \centering
    \includegraphics[width=1\textwidth]{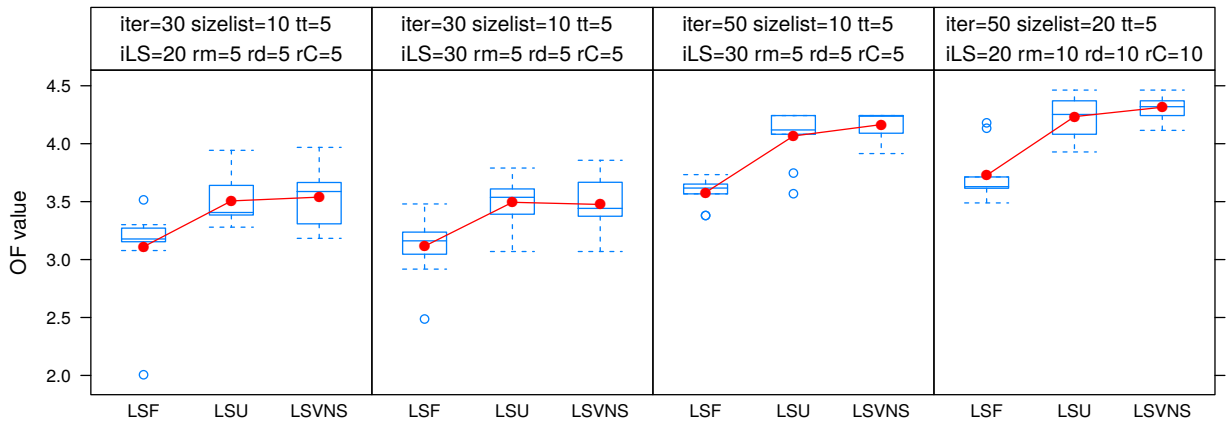}
  \caption{Box-Plot of 10 runs for experiments TSILP-LS on $G_{6 \times 6}$ (instance 5).}
  \label{boxplot5}
\end{figure}

For $5 \times 5 $, $6 \times 6 $ sizes problems the increase in the number of iterations, size list and selected random variables produced better results. It is clear that as the problem size increases, the heuristics times are more competitive.

Results for larger instances are shown in Table \ref{tableFINALlarge}. Xpress was not able to find the optimal solution for any of them within a time limit of 3 hours. For these instances it can be observed that TSILP-LSVNS performs better. See Figures \ref{boxplot10} and \ref{boxplot14}.

\begin{table}[ht!]
  \centering
  \caption{Computational Results for TSILP procedure (large instances).}
  \label{tableFINALlarge}%
    \footnotesize
\renewcommand{\arraystretch}{1.5} 
    \begin{tabular}{r@{\extracolsep{0.2cm}}r@{\extracolsep{0.3cm}}r@{\extracolsep{0.1cm}}r@{\extracolsep{0.1cm}}r@{\extracolsep{0.3cm}}r@{\extracolsep{0.1cm}}r@{\extracolsep{0.1cm}}r@{\extracolsep{0.1cm}}r@{\extracolsep{0.35cm}}r@{\extracolsep{0.2cm}}r@{\extracolsep{0.2cm}}r@{\extracolsep{0.2cm}}r@{\extracolsep{0.35cm}}r@{\extracolsep{0.2cm}}r@{\extracolsep{0.2cm}}r@{\extracolsep{0.2cm}}r@{\extracolsep{0.35cm}}r@{\extracolsep{0.2cm}}r@{\extracolsep{0.2cm}}r@{\extracolsep{0.2cm}}r}
    \toprule
         &        & \multicolumn{3}{c}{Global} & \multicolumn{4}{c}{Global/LS} & \multicolumn{4}{c}{TSILP-LSF} & \multicolumn{4}{c}{TSILP-LSU} & \multicolumn{4}{c}{TSILP-LSVNS}\\
    \cmidrule(){3-5}
    \cmidrule(){6-9}
    \cmidrule(){10-13}
    \cmidrule(){14-17}
    \cmidrule(){18-21} 

size & \#  & iter & sl & tt & iLS & rm & rd & rC & avg & worst & best & avgt & avg & worst & best & avgt & avg & worst & best & avgt\\
    \midrule
    7x7   & 7 & 30    & 10    & 5     & 20    & 5     & 5     & 5     & 4.33  & 4.24  & 4.34  & 292   & 4.51  & 4.35  & 4.66  & 484   & 4.63  & 4.56  & 4.68  & 546 \\
          &       & 30    & 10    & 5     & 30    & 5     & 5     & 5     & 4.34  & 4.27  & 4.46  & 213   & 4.51  & 4.35  & \underline{4.72}  & 440   & 4.58  & 4.36  & \underline{4.72}  & 520 \\
          &       & 50    & 10    & 5     & 30    & 5     & 5     & 5     & 4.34  & 4.34  & 4.35  & 300   & 4.56  & 4.35  & 4.70  & 625   & 4.66  & 4.53  & \underline{4.72}  & 815 \\
          &       & 50    & 20    & 5     & 30    & 10    & 10    & 10    & 4.37  & 4.24  & 4.55  & 623   & 4.67  & 4.56  & \underline{4.72}  & 1469  & 4.70  & 4.66  & \underline{4.72}  & 1700 \\
\cmidrule{3-21}          & 8 & 30    & 10    & 5     & 20    & 5     & 5     & 5     & 3.11  & 2.88  & 3.19  & 175   & 3.28  & 3.19  & 3.46  & 347   & 3.40  & 3.19  & 3.55  & 417 \\
          &       & 30    & 10    & 5     & 30    & 5     & 5     & 5     & 3.15  & 2.88  & 3.19  & 162   & 3.31  & 3.12  & 3.48  & 329   & 3.41  & 3.19  & 3.83  & 425 \\
          &       & 50    & 10    & 5     & 30    & 5     & 5     & 5     & 3.21  & 3.19  & 3.38  & 399   & 3.35  & 3.19  & 3.55  & 613   & 3.63  & 3.23  & 3.99  & 817 \\
          &       & 50    & 20    & 5     & 30    & 10    & 10    & 10    & 3.29  & 3.19  & 3.82  & 581   & 3.52  & 3.22  & 4.30  & 1113  & 3.81  & 3.36  & \underline{4.39}  & 1323 \\
    \midrule
    8x8   & 9 & 30    & 10    & 5     & 20    & 5     & 5     & 5     & 4.08  & 3.85  & 4.28  & 219   & 4.22  & 4.20  & 4.28  & 398   & 4.23  & 4.20  & 4.28  & 483 \\
          &       & 30    & 10    & 5     & 30    & 5     & 5     & 5     & 3.99  & 3.85  & 4.19  & 281   & 4.21  & 4.20  & 4.24  & 715   & 4.24  & 4.20  & 4.28  & 910 \\
          &       & 50    & 10    & 5     & 30    & 5     & 5     & 5     & 4.04  & 3.85  & 4.19  & 472   & 4.23  & 4.20  & \underline{4.33}  & 806   & 4.25  & 4.20  & \underline{4.33}  & 900 \\
          &       & 50    & 20    & 5     & 30    & 10    & 10    & 10    & 4.22  & 4.19  & 4.28  & 778   & 4.28  & 4.20  & \underline{4.33}  & 1710  & 4.30  & 4.24  & \underline{4.33}  & 1760 \\
\cmidrule{3-21}          & 10 & 30    & 10    & 5     & 20    & 5     & 5     & 5     & 2.25  & 2.08  & 2.84  & 167   & 3.07  & 2.54  & 3.37  & 397   & 3.10  & 2.75  & 3.53  & 524 \\
          &       & 30    & 10    & 5     & 30    & 5     & 5     & 5     & 2.25  & 2.01  & 2.84  & 218   & 3.03  & 2.38  & 3.49  & 601   & 3.12  & 2.77  & 3.49  & 727 \\
          &       & 50    & 10    & 5     & 30    & 5     & 5     & 5     & 2.33  & 2.08  & 2.96  & 382   & 3.35  & 2.84  & 3.72  & 710   & 3.42  & 2.84  & 3.61  & 1186 \\
          &       & 50    & 20    & 5     & 30    & 10    & 10    & 10    & 3.01  & 2.68  & 3.22  & 567   & 3.54  & 3.27  & 3.92  & 1345  & 3.72  & 3.29  & \underline{4.10}  & 1688 \\
    \midrule
    9x9   & 11 & 30    & 10    & 5     & 20    & 5     & 5     & 5     & 3.33  & 3.00  & 3.70  & 282   & 4.15  & 3.61  & 4.35  & 524   & 4.15  & 3.61  & 4.31  & 620 \\
          &       & 30    & 10    & 5     & 30    & 5     & 5     & 5     & 3.40  & 3.00  & 4.02  & 324   & 4.04  & 3.61  & 4.31  & 620   & 4.20  & 3.61  & 4.49  & 755 \\
          &       & 50    & 10    & 5     & 30    & 5     & 5     & 5     & 3.43  & 3.00  & 3.65  & 522   & 4.25  & 3.61  & 4.88  & 893   & 4.93  & 4.75  & 5.01  & 928 \\
          &       & 50    & 20    & 5     & 30    & 10    & 10    & 10    & 4.14  & 3.79  & 4.42  & 479   & 4.36  & 4.26  & 4.49  & 1131  & 4.80  & 4.38  & \underline{5.20}  & 1206 \\
\cmidrule{3-21}          & 12 & 30    & 10    & 5     & 20    & 5     & 5     & 5     & 4.17  & 4.07  & 4.31  & 270   & 4.31  & 4.24  & 4.71  & 541   & 4.36  & 4.24  & 4.71  & 646 \\
          &       & 30    & 10    & 5     & 30    & 5     & 5     & 5     & 4.18  & 4.07  & 4.31  & 316   & 4.25  & 4.22  & 4.31  & 699   & 4.28  & 4.24  & 4.42  & 848 \\
          &       & 50    & 10    & 5     & 30    & 5     & 5     & 5     & 4.18  & 4.07  & 4.42  & 539   & 4.26  & 4.24  & 4.31  & 901   & 4.39  & 4.24  & \underline{4.74}  & 1223 \\
          &       & 50    & 20    & 5     & 30    & 10    & 10    & 10    & 4.27  & 4.12  & 4.42  & 658   & 4.43  & 4.24  & 4.56  & 1324  & 4.45  & 4.31  & 4.61  & 1698 \\
    \midrule
    10x10 & 13 & 30    & 10    & 5     & 20    & 5     & 5     & 5     & 1.63  & 1.55  & 1.91  & 9937  & 1.85  & 1.76  & 1.91  & 10123 & 1.86  & 1.76  & 1.91  & 10259 \\
          &       & 30    & 10    & 5     & 30    & 5     & 5     & 5     & 1.75  & 1.55  & 1.91  & 10007 & 1.84  & 1.57  & 1.91  & 10278 & 1.89  & 1.72  & 1.91  & 10488 \\
          &       & 50    & 10    & 5     & 30    & 5     & 5     & 5     & 1.71  & 1.55  & 1.91  & 10218 & 1.86  & 1.66  & 1.91  & 10714 & 1.91  & 1.90  & 1.91  & 11058 \\
          &       & 50    & 20    & 5     & 30    & 10    & 10    & 10    & 1.89  & 1.72  & 1.91  & 10553 & 1.91  & 1.91  & 1.91  & 11341 & 1.94  & 1.73  & \underline{2.01}  & 11729 \\
\cmidrule{3-21}          & 14 & 30    & 10    & 5     & 20    & 5     & 5     & 5     & 2.36  & 2.30  & 2.46  & 474   & 2.86  & 2.56  & 2.97  & 778   & 2.91  & 2.75  & 2.97  & 918 \\
          &       & 30    & 10    & 5     & 30    & 5     & 5     & 5     & 2.45  & 2.30  & 2.90  & 554   & 2.85  & 2.52  & 3.03  & 769   & 2.94  & 2.79  & 3.05  & 958 \\
          &       & 50    & 10    & 5     & 30    & 5     & 5     & 5     & 2.40  & 2.30  & 2.55  & 648   & 2.93  & 2.84  & 3.05  & 1407  & 2.95  & 2.79  & 3.05  & 2016 \\
          &       & 50    & 20    & 5     & 30    & 10    & 10    & 10    & 2.98  & 2.77  & 3.05  & 876   & 3.13  & 3.04  & 3.21  & 2082  & 3.20  & 3.05  & \underline{3.24}  & 3238 \\
    \bottomrule
    \end{tabular}%
\end{table}%

\begin{figure}[ht!]
  \centering
    \includegraphics[width=1\textwidth]{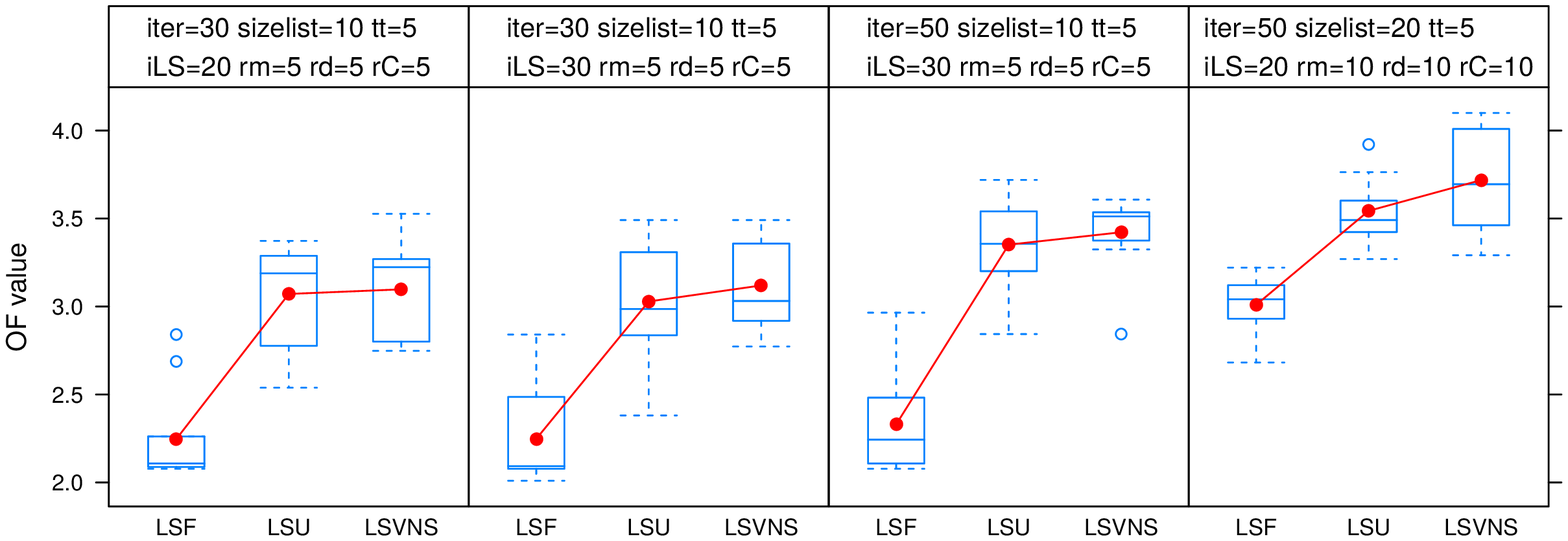}
  \caption{Box-Plot of 10 runs for experiments TSILP-LS on $G_{8 \times 8}$ (instance 10).}
  \label{boxplot10}
\end{figure}

\vspace{0.1cm}

\begin{figure}[ht!]
  \centering
    \includegraphics[width=1\textwidth]{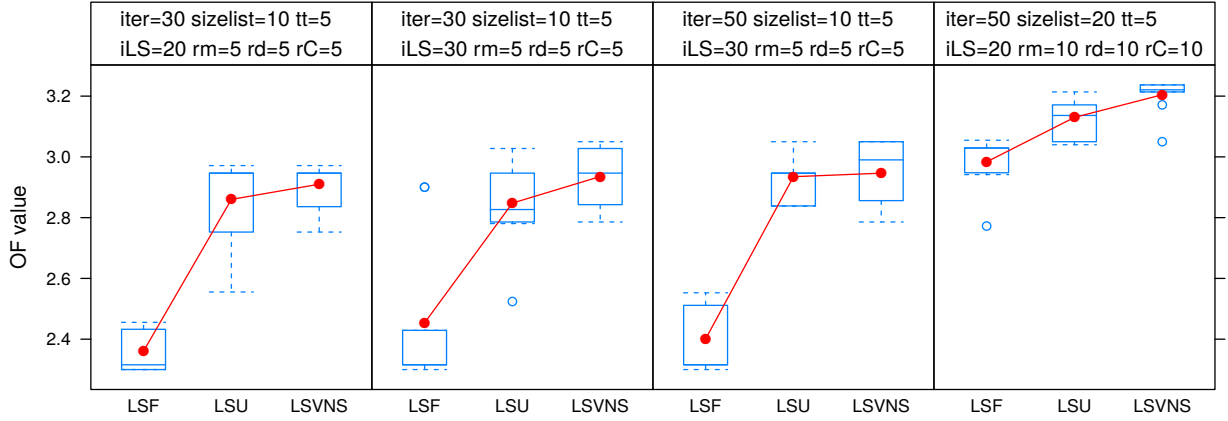}
  \caption{Box-Plot of 10 runs for experiments TSILP-LS on $G_{10 \times 10}$ (instance 14).}
  \label{boxplot14}
\end{figure}

The running times include the time needed to find the first initial feasible solution using the standard branch and bound algorithm in Xpress. For instance \#13 this time was 9657 seconds, but just 152 for instance \#14. The remainder took less than 11 seconds.

In most of the cases the average solution of TSILP-LSVNS improved the average solutions obtained with the first two methods. TSILP-LSF took less time, but almost never found the best objective function value obtained by the other algorithms.

Due to the vast amount of initial data, having the integer model ready to use before applying any algorithm takes some time that was not considered in the running times, but for the largest instances tested it averaged $20.73$ seconds. This is time used by Xpress to generate the model that will be solved.

The best results were found for the largest instances. In this case the three algorithms, even in the worst cases, reached much better solution than Xpress. Table \ref{10best} shows the first integer solution (f\_OF), the time to meet that solution (t\_fs) and the best integer solution after wait a considerable time in relation to those reached in the experiments with the proposed algorithms, 3 hours of running time (t$>$3h), by Xpress.

\vspace{0.3cm}

\begin{table}[ht!]
  \centering
  \caption{10$\times$10 instances vs Xpress.}
  \label{10best}
      \footnotesize
    \begin{tabular}{c@{\extracolsep{0.3cm}}c@{\extracolsep{0.4cm}}r@{\extracolsep{0.5cm}}r@{\extracolsep{0.5cm}}r@{\extracolsep{0.4cm}}r@{\extracolsep{0.3cm}}r@{\extracolsep{0.4cm}}r@{\extracolsep{0.3cm}}r@{\extracolsep{0.4cm}}r@{\extracolsep{0.3cm}}r}
    \toprule
          &       & \multicolumn{3}{c}{Exact} & \multicolumn{2}{c}{TSILP\_LSF} & \multicolumn{2}{c}{TSILP\_LSU} & \multicolumn{2}{c}{TSILP\_LSVNS}\\
    \cmidrule(){3-5}
    \cmidrule(){6-7}
    \cmidrule(){8-9}
    \cmidrule(){10-11}
size & \#  & f\_OF & t\_fs & t$>$3h & worst & best & worst & best & worst & best \\
    \midrule
    10x10 & 13    & 0.14  & 9657  & 1.55  & 1.55  & 1.91  & 1.57  & 1.91  & 1.72  & 2.01 \\
          & 14    & 0.54  & 152   & 2.19  & 2.30  & 3.05  & 2.52  & 3.21  & 2.75  & 3.24 \\
    \bottomrule
    \end{tabular}%
\end{table}%

\subsection{Conclusions}\label{conclusionsTLSP}

We started with a complete review of MAXBAND. We generalized the integer variables bounds given in \cite{Little66} to the network case. Cycle integer variables bounds are given as well.

We proposed a heuristic algorithm based on tabu search that takes advantage of the mixed integer linear MAXBAND model to obtain feasible solutions. During the search for a solution, we solve problems in a reduced feasible space, as the proposed algorithm begins with a feasible solution and then we use the obtained integer values to be used in subsequent iterations. The results were better when a serial application of LSF and LSU was done in a local search (VNS). As seen in the experiments, the best results were obtained in the largest instances tested.

In practice, there are many other aspects that require further attention and that we did not take into account in our experiments, such as prioritizing certain arteries with high traffic. This would change the weights in the objective function.

\section{Acknowledgments}
The research of Xavier Cabezas has been supported in part by SENESCYT-Ecuador (National Secretary of Higher Education,
Science, Technology and Innovation of Ecuador).\\

\noindent The research of Sergio Garc\'ia has been supported by Fundaci\'on S\'eneca (project 19320/PI/14, Regi\'on of Murcia, Spain).


\bibliography{bibliographyTLTS}

\begin{thebibliography}{}

\bibitem[Braun and Weichenmeier, 2005]{genetic3}
Braun, R. and Weichenmeier, F. (2005).
\newblock Automatic offline-optimization of coordinated traffic signal control
  in urban networks using genetic algorithms.
\newblock {\em Proceedings of the 12th World Congress on Intelligent Transport
  Systems}.

\bibitem[Chaudhary, 1987]{Chaudhary87}
Chaudhary, N. (1987).
\newblock {\em A mixed integer linear programming approach for obtaining and
  optimal signal timing plan in general traffic networks}.
\newblock PhD thesis, Texas A\&M University.

\bibitem[Cohen, 1983]{transyt}
Cohen, S. (1983).
\newblock Concurrent use of \text{MAXBAND} and \text{TRANSYT} signal timing
  programs for arterial signal optimization.
\newblock {\em Transportation Research Record}, 906:81--84.

\bibitem[Gartner et~al., 1991]{multiband}
Gartner, N., Assmann, S., Lasaga, F., and Hou, D. (1991).
\newblock A multi-band approach to arterial traffic signal.
\newblock {\em Transportation Research}, 25B(1):55--74.

\bibitem[Gartner et~al., 1975]{Gartner75}
Gartner, N., Little, D., and Gabbay, H. (1975).
\newblock Optimization of traffic signal settings by mixed-integer linear
  programming; part \text{I}: The network coordination problem; part \text{II}:
  The network synchronization problem.
\newblock {\em Transportation Science}, 9:321--363.

\bibitem[Gartner and Stamatiadis, 2002]{GartnerNew}
Gartner, N. and Stamatiadis, C. (2002).
\newblock Arterial-based control of traffic flow in urban grid networks.
\newblock {\em Mathematical and Computer Modelling}, 35:657--671.

\bibitem[Glover, 1986]{GloverTS1}
Glover, F. (1986).
\newblock Future paths for integer programming and links to artificial
  intelligence.
\newblock {\em Computers and Operations Research}, 13:533--549.

\bibitem[Improta and Sforza, 1982]{Improta1982}
Improta, G. and Sforza, A. (1982).
\newblock Optimal offsets for traffic signal systems in urban networks.
\newblock {\em Transportation Research Part B: Methodological}, 16(2):143--161.

\bibitem[Kavitha et~al., 2009]{LiebC}
Kavitha, T., Liebchen, C., Mehlhorn, K., Michail, D., Rizzi, R., Ueckerdt, T.,
  and Zweig, K. (2009).
\newblock Cycle bases in graphs: Characterization, algorithms, complexity and
  applications.
\newblock {\em Computer Science}, 3(4):199--243.

\bibitem[Kirkpatrick et~al., 1983]{Kirkpatrick1983}
Kirkpatrick, L., Gelatt~Jr., C., and Vecchi, M. (1983).
\newblock Optimization by simulated annealing.
\newblock {\em Procedia - Social and Behavioral Sciences}, 220(4598):671--680.

\bibitem[K\"{o}hler and Strehler, 2015]{Kohler}
K\"{o}hler, E. and Strehler, M. (2015).
\newblock Traffic signal optimization using cyclically expanded networks.
\newblock {\em Wiley Periodicals, Networks}, 65:244--261.

\bibitem[Liebchen and Rizzi, 2007]{LandR}
Liebchen, C. and Rizzi, R. (2007).
\newblock Classes of cycle bases.
\newblock {\em Discrete Applied Mathematics}, 155(3):337--355.

\bibitem[Little, 1966]{Little66}
Little, D. (1966).
\newblock The synchronization of traffic signals by mixed-integer linear
  programming.
\newblock {\em Operations Research}, 14(4):568--594.

\bibitem[Little et~al., 1981]{Little81}
Little, D., Kelson, M., and Gartner, N. (1981).
\newblock \text{MAXBAND}: A versatile program for setting signal on arteries
  and triangular networks.
\newblock {\em Transportation Research Record}, 795:40--46.

\bibitem[Lu et~al., 2014]{Lu2014}
Lu, T., Sohr, A., and Bei, X. (2014).
\newblock Comparison of the effectiveness of common cycle computing models.
\newblock {\em Procedia - Social and Behavioral Sciences}, 138:358--367.

\bibitem[Mladenovi\'c and Hansen, 1997]{VNS}
Mladenovi\'c, N. and Hansen, P. (1997).
\newblock Variable neighbourhood search.
\newblock {\em Computers and Operations Research}, 24:1097--1100.

\bibitem[Morgan and Little, 1964]{Little64}
Morgan, J. and Little, D. (1964).
\newblock Synchronizing traffic signals for maximal bandwidth.
\newblock {\em Operations Research}, 12(6):896--912.

\bibitem[Ratrout and Reza, 2014]{Ratrout2014}
Ratrout, N. and Reza, I. (2014).
\newblock Comparison of optimal signal plans by synchro and transyt-7f using
  paramics-a case study.
\newblock {\em Procedia Computer Science}, 32:372--379.

\bibitem[Robertson, 1969]{Robertson69}
Robertson, D. (1969).
\newblock {\em \text{TRANSYT}, a traffic network study tool. Technical Report}.
\newblock Crowthorne, Berkshire.

\bibitem[Singh et~al., 2009]{genetic4}
Singh, L., Tripathi, S., and Arora, H. (2009).
\newblock Time optimization for traffic signal control using genetic algorithm.
\newblock {\em Expert Systems with Applications}, 2(2).

\bibitem[W\"{u}nsch, 2008]{ThesisW}
W\"{u}nsch, G. (2008).
\newblock {\em Coordination of traffic signals in networks}.
\newblock PhD thesis, Technische Universit\"{a}t Berlin.

\bibitem[Xianyu et~al., 2013]{linkbased}
Xianyu, W., Peifeng, H., and Zhenzhou, Y. (2013).
\newblock Link-based signalized arterial progression optimization with
  practical travel speed.
\newblock {\em Journal of Applied Mathematics}.

\bibitem[Xianyu et~al., 2012]{Xianyu2012}
Xianyu, W., Zong, T., Peifeng, H., and Zhenzhou, Y. (2012).
\newblock Bandwidth optimization of coordinated arterials based on group
  partition method.
\newblock {\em Procedia - Social and Behavioral Sciences}, 43:232--244.

\bibitem[Zhang et~al., 2015]{amband}
Zhang, C., Xie, Y., Gartner, N., Stamatiadis, C., and Arsava, T. (2015).
\newblock \text{AM-BAND}: An asymmetrical multi-band model for arterial traffic
  signal coordination.
\newblock {\em Transportation Research Part C: Emerging Technologies},
  58:515--531.

\end{thebibliography}
\bibliographystyle{apalike}

\end{document}